\numberwithin{equation}{section}
\newtheorem{thm}{Theorem}[section]
\newtheorem{prop}[thm]{Proposition}
\newtheorem{lemma}[thm]{Lemma}
\newtheorem{remark}[thm]{Remark}
\newtheorem{proposition}[thm]{Proposition}
\newtheorem{definition}[thm]{Definition}
\newtheorem{corollary}[thm]{Corollary}
\newenvironment{proof}{{\bf Proof:}}{\hfill$\square$\vskip.5cm}
\newenvironment{proofof}{}{\hfill$\square$\vskip.5cm}
\newcommand{\R}{\mathbf{R}}
\newcommand{\Z}{\mathbf{Z}}
\newcommand{\N}{\mathbf{N}}
\newcommand{\inv}{\operatorname{inv}}
\newcommand{\unity}{\mathbf{1}}
\title{The Length of the Longest Increasing Subsequence of a Random Mallows Permutation}
\author{Carl Mueller$^1$ and Shannon Starr$^2$}
\date{March 24, 2011}
\begin{document}

\markright{}

\maketitle

\footnotetext[1]{This research was supported in part by U.S.\ National Science Foundation
grant DMS-0703855.}
\footnotetext[2]{This research was supported in part by U.S.\ National Science Foundation
grants DMS-0706927 and DMS-0703855.}

\begin{abstract}

The Mallows measure on the symmetric group $S_n$ is the probability measure 
such that each permutation has probability proportional to $q$ raised to the 
power of the number of inversions, where $q$ is a positive parameter and the 
number of inversions of $\pi$ is equal to the number of pairs $i<j$ such that 
$\pi_i > \pi_j$.  We prove a weak law of large numbers for the length of the 
longest increasing subsequence for Mallows distributed random permutations, in 
the limit that $n \to \infty$ and $q \to 1$ in such a way that $n(1-q)$ has a 
limit in $\R$.

\vspace{8pt}
\noindent
{\small \bf Keywords: random permutations}
\vskip .2 cm
\noindent
{\small \bf MCS numbers: 335}
\end{abstract}

\section{Main Result}
\label{sec:MainResult}

There is an extensive literature dealing with the longest increasing 
subsequence of a random permutation.  Most of these papers deal with
uniform random permutations.  Our goal is to study the longest 
increasing subsequence under a different measure, the Mallows measure, which 
is motivated by statistics \cite{Mallows}. We begin by defining our terms and stating
the main result, and then we give some historical perspective. 

The $\textrm{Mallows}(n,q)$ probability measure on permutations $S_n$ is given by
\begin{equation}
\label{eq:DefMallowsMeasure}
\mu_{n,q}(\{\pi \})\, =\, [Z(n,q)]^{-1} q^{\inv(\pi)}\, ,
\end{equation}
where $\textrm{inv}(\pi)$ is the number of ``inversions'' of $\pi$,
\begin{equation}
\inv(\pi)\, =\, \#\{ (i,j) \in \{1,\dots,n\}^2\, :\, i<j\, ,\ \pi_i>\pi_j \}\, .
\end{equation}
The normalization is $Z(n,q) = \sum_{\pi \in S_n} q^{\inv(\pi)}$.
See \cite{DiaconisRam} for more background and interesting features of the Mallows measure.
The measure is related to representations of the Iwahori-Hecke algebra as Diaconis
and Ram explain.
It is also related to a natural $q$-deformation of exchangeability which has been
recently discovered and explained by Gnedin and Olshanski \cite{GnedinOlshanski,GnedinOlshanski2}.

We are interested in the length of the longest increasing subsequence
in this distribution.
The length of the longest increasing
subsequence of a permutation $\pi \in S_n$ is 
\begin{equation}
\label{eq:DefLengthLIS}
\ell(\pi)\, =\, \max \{k\leq n\, :\, \pi_{i_1} < \dots < \pi_{i_k}\
\textrm{ for some }\ i_1<\dots<i_k \} \, .
\end{equation}

Our main result is the following.
\begin{thm}
\label{thm:main}
Suppose that $(q_n)_{n=1}^{\infty}$ is a sequence such that the limit
$\beta = \lim_{n \to \infty} n(1-q_n)$ exists.
Then
$$
\lim_{n \to \infty} 
\mu_{n,q_n}\left ( \left \{ \pi \in S_n \, : \, 
|n^{-1/2} \ell(\pi) - \mathcal{L}(\beta)| < \epsilon \right \} \right )\, 
=\, 1\, ,
$$
for all $\epsilon>0$, where
\begin{equation}
\label{eq:LDefinition}
\mathcal{L}(\beta)\, =\, 
\begin{cases} 2 \beta^{-1/2} \sinh^{-1}(\sqrt{e^{\beta}-1}) & \text{ for $\beta>0$,}\\
2 & \text{ for $\beta=0$,}\\
2 |\beta|^{-1/2} \sin^{-1}(\sqrt{1-e^{\beta}})& \text{ for $\beta<0$.}
\end{cases}
\end{equation}
\end{thm}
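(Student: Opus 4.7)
My plan is to combine a law of large numbers for the empirical measure of the scaled Mallows permutation matrix with a Hammersley/Deuschel--Zeitouni-type variational characterization of the longest increasing subsequence. The overall strategy is to convert the combinatorial LIS question into a geometric one about longest increasing paths through a random point cloud on $[0,1]^2$, use the LLN to replace the random density by a deterministic one $u_\beta(x,y)$, and then evaluate the resulting continuum variational problem in closed form.

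The first ingredient is to establish (or cite) a law of large numbers saying that under the scaling $n(1-q_n)\to\beta$ the scaled empirical measure $\mu_n^\pi := \frac{1}{n}\sum_{i=1}^n \delta_{(i/n,\pi_i/n)}$ on $[0,1]^2$ converges weakly, in probability under $\mu_{n,q_n}$, to a deterministic measure with an explicit continuous density $u_\beta(x,y)$. The density $u_\beta$ arises as the maximizer of an entropy functional subject to a constraint on the asymptotic inversion density; it is peaked along the diagonal for $\beta>0$ (the identity is favored) and along the antidiagonal for $\beta<0$ (the reversal is favored). Given this, the target variational statement is
$$\lim_{n\to\infty} n^{-1/2} \ell(\pi) \;=\; 2\sup_\phi \int_0^1 \sqrt{u_\beta(t,\phi(t))\,\phi'(t)}\, dt \;=\; \mathcal{L}(\beta),$$
in probability under $\mu_{n,q_n}$, the supremum running over non-decreasing $\phi:[0,1]\to[0,1]$ with $\phi(0)=0$ and $\phi(1)=1$. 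This is the analog of the Deuschel--Zeitouni formula for LIS of a point process of intensity $n u_\beta$, specialized to monotone paths.

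To establish this variational formula, I would prove matching upper and lower bounds. For the lower bound, fix a near-optimal $\phi^*$, cover its graph by a thin tube partitioned into many consecutive rectangles, and use the LLN to control the number of permutation-matrix points in each rectangle. A short lemma---either via a coupling to a uniform comparison permutation or directly from the Mallows conditionals---shows that inside each small rectangle the permutation is close enough to uniform that the Logan--Shepp--Vershik--Kerov estimate applies, yielding a local increasing subsequence of length $\approx 2\sqrt{n\, u_\beta \cdot \mathrm{area}}$; concatenation across rectangles produces a Riemann sum approximating the integral. The upper bound is dual: partition $[0,1]^2$ into small rectangles, bound the number of points of any increasing subsequence lying in each rectangle by $(2+o(1))\sqrt{N}$ in terms of the rectangle's point count $N$, and take the supremum over the induced discretization of increasing paths before passing to the continuum limit.

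Finally, with the variational formula in hand, I would evaluate the supremum for the explicit $u_\beta$. The Euler--Lagrange equation becomes separable and integrates elementarily, producing $\sinh^{-1}$ for $\beta>0$ and $\sin^{-1}$ for $\beta<0$ exactly as in \eqref{eq:LDefinition}; the case $\beta=0$ follows by continuity and recovers the Logan--Shepp--Vershik--Kerov constant $\mathcal{L}(0)=2$. The principal obstacle is the upper bound in the middle step: since $\ell(\pi)$ is not a continuous functional of $\mu_n^\pi$ in the weak topology, the discretization must be chosen so that the local uniform approximation is valid within each block yet the accumulation of per-block LLN errors does not produce a spurious $\sqrt n$-scale correction in the sum $\sum_k 2\sqrt{N_k}$. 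Proving a sufficiently sharp form of the LLN, or implementing a direct local coupling to uniform permutations within blocks, is therefore the core technical hurdle.
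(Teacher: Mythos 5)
Your proposal follows essentially the same route as the paper's proof: a law of large numbers for the rescaled empirical measure (the paper imports this from Starr's earlier work), a decomposition into paths of small boxes giving matching super- and sub-additive bounds, a local comparison to near-uniform permutations inside each box so that the Vershik--Kerov/Logan--Shepp constant applies blockwise (this is exactly what the paper's coupling lemma and temperature-renormalization lemma accomplish, and, as you anticipate, it is the most involved step, not a short one), and finally a closed-form evaluation of the resulting Deuschel--Zeitouni-type variational problem. The only real divergence is in the last step, where the paper changes the \emph{a priori} measure so the limiting density becomes $(1-\beta xy)^{-2}$ on $[0,L(\beta)]^2$ and proves optimality of the diagonal by a reparametrization plus an elementary Cauchy/AM--GM bound rather than by solving an Euler--Lagrange equation for the original density on $[0,1]^2$ (note that merely exhibiting a critical point would not by itself give the needed upper bound over all monotone paths).
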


\begin{remark}
There is another formula for the function in \ref{eq:LDefinition} as
an integral in equation \ref{eq:integralformula}, below.
\end{remark}

In a recent paper \cite{BorodinDiaconisFulman} Borodin, Diaconis and Fulman asked about the Mallows measure,
``Picking a permutation randomly from $P_{\theta}$ (their notation for the Mallows measure),
what is the distribution of the cycle structure, longest increasing subsequence, . . . ?"
We answer the question about the longest increasing subsequence at the level of the weak law of large numbers.

Note that the Mallows measure for $q=1$ reduces to the uniform measure on $S_n$:
\begin{equation}
\notag
\mu_{n,1}(\pi)\, =\, \frac{1}{n!}\, ,
\end{equation}
for all $\pi \in S_n$.
For the uniform measure,  Vershik and Kerov \cite{VershikKerov} and Logan and Shepp \cite{LoganShepp}
already proved a weak law of large numbers for the length of the longest increasing subsequence.
We will use their result in our proof, so we state it here:
\begin{proposition}
\label{prop:VKLSPermutation}
\begin{equation}
\label{eq:PropositionVKLS}
\lim_{n \to \infty} \mu_{n,1}\{ \pi \in S_n\, :\, |n^{-1/2} \ell(\pi) - 2 | > \epsilon \} \, 
=\, 0\, ,
\end{equation}
for all $\epsilon>0$.
\end{proposition}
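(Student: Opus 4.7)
My plan is to prove Proposition \ref{prop:VKLSPermutation} along the lines of Vershik--Kerov and Logan--Shepp. The Robinson--Schensted correspondence is a bijection from $S_n$ onto pairs $(P,Q)$ of standard Young tableaux of a common shape $\lambda(\pi) \vdash n$, with the crucial property that the first-row length $\lambda_1(\pi)$ equals $\ell(\pi)$. Pushing the uniform measure $\mu_{n,1}$ through this bijection yields the Plancherel measure $\P_n(\lambda) = d_\lambda^2/n!$ on partitions of $n$, where $d_\lambda$ is the number of standard Young tableaux of shape $\lambda$. It therefore suffices to prove that $n^{-1/2}\lambda_1 \to 2$ in $\P_n$-probability.

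The main tool is the hook-length formula $d_\lambda = n!/\prod_{c\in\lambda} h(c)$. I would describe $\lambda \vdash n$ by its rescaled profile
$$
\psi_n(x)\, =\, n^{-1/2}\cdot \#\{i\ge 1\, :\, \lambda_i \ge x\sqrt{n}\}\, ,\qquad x\ge 0\, ,
$$
and replace sums over cells by Riemann integrals against $\psi_n$, obtaining an asymptotic expansion of the form
$$
-\, n^{-1}\log\P_n(\lambda)\, =\, I(\psi_n)\, +\, o(1)\, ,
$$
uniform on sufficiently regular profiles, where $I(\psi)$ is an explicit energy functional given by a double integral of $-\log|x-y|$ against $d\psi(x)\, d\psi(y)$ plus lower-order terms. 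Vershik--Kerov and Logan--Shepp then solve the associated variational problem: $I$ is strictly convex and has a unique minimizer $\Omega$, the VKLS shape, among non-increasing profiles satisfying $\int \psi = 1$, and the support of $\Omega$ is the interval $[0,2]$, so $\Omega$ has first-row extent exactly $2$.

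To finish I would invoke a Laplace-type concentration argument. A coarse discretization of profile space contains only $\exp(o(n))$ bins, so the factor $\exp(-n I(\psi_n))$ together with strict convexity of $I$ forces $\P_n$ to concentrate on profiles close to $\Omega$ in a suitable norm. Continuity of the first-row functional $\psi \mapsto \sup\{x\, :\, \psi(x) > 0\}$ in that norm then yields $n^{-1/2}\lambda_1 \to 2$ in probability, which is the desired conclusion.

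The main obstacle is making the asymptotic expansion $-n^{-1}\log\P_n \to I$ sufficiently uniform to support the concentration step, in the presence of the logarithmic singularity of the kernel and the mild non-compactness of profile space. A preliminary first-moment estimate
$$
\E_{\mu_{n,1}}[\#\{\text{increasing subsequences of length }k\}]\, =\, \binom{n}{k}\cdot\frac{1}{k!}\, ,
$$
which by Stirling is exponentially small once $k \ge (e+\epsilon)\sqrt{n}$, restricts attention to partitions with $\lambda_1 \le C\sqrt{n}$ and localizes the variational analysis to a compact region of profile space, after which the Laplace argument runs without further difficulty.
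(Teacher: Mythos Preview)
The paper does not supply its own proof of this proposition: right after the statement it says ``The reader can find the proof of this proposition in \cite{VershikKerov} and \cite{LoganShepp},'' and thereafter uses the result as a black box (for instance inside the proof of Corollary~\ref{cor:ProbabilityBounds}). So there is nothing in the paper to compare against beyond those citations, and your sketch is exactly the Vershik--Kerov/Logan--Shepp route they point to: RSK transfers the problem to Plancherel measure, the hook-length formula yields an energy functional on rescaled profiles, and the VKLS shape $\Omega$ is identified as the unique minimizer.

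One step in your outline does not work as written. The functional $\psi\mapsto\sup\{x:\psi(x)>0\}$ is \emph{not} continuous in any of the topologies (uniform, $L^1$) in which profile convergence to $\Omega$ is established: a non-increasing profile can be uniformly $\epsilon$-close to $\Omega$ and still carry a tail of height $n^{-1/2}$ out to, say, $x=3$, corresponding to a single anomalously long first row. Profile concentration therefore delivers only the lower bound $\liminf n^{-1/2}\lambda_1\ge 2$, while your first-moment estimate caps the upper bound at $e$, not at $2$. In the actual arguments of \cite{VershikKerov,LoganShepp} the matching upper bound comes from a direct hook-length estimate showing that $\P_n\{\lambda_1>(2+\epsilon)\sqrt{n}\}\to 0$ --- the hook product, and hence the energy, is already too large for any shape whose support protrudes past~$2$ --- rather than from continuity of a support-endpoint map. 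With that repair your plan coincides with the cited proofs.
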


The reader can find the proof of this proposition in \cite{VershikKerov} and 
\cite{LoganShepp}. 
The proof of Vershik, Kerov, Logan and Shepp involved a deep connection in combinatorics
known as the Robinson-Schensted-Knuth algorithm.
In the RSK algorithm, for each permutation $\pi \in S_n$,
one associates a pair of Young tableaux, with the same shape:
a Young diagram or partition, $\lambda \vdash n$.
Referring to this as $\lambda = \operatorname{sh}(\pi)$,
the first row, $\lambda_1$ is the length of the longest increasing subsequence of $\pi$.
Vershik and Kerov considered the Plancherel measure on the set of partitions, where 
$\mu_n(\lambda)$ is proportional to the number of permutations $\pi$ such that
$\operatorname{sh}(\pi) = \lambda$.
Using the hook length formula of Frame, Robinson and Thrall for this probability measure, 
they defined the ``hook integral''
for limiting (rescaled) shapes of Young tableaux.
They showed concentration of measure of the Plancherel measure,
in the $N \to \infty$ limit, around the optimal shape of the hook integral.
See, for example, \cite{Kerov} for many more details.

The RSK approach was taken further by Baik, Deift and Johansson 
\cite{BaikDeiftJohansson}.
In their seminal work, they gave
a complete description of the fluctuations of $\lambda_1$, which are similar to fluctuations of the largest
eigenvalue in random matrix theory \cite{TracyWidom}.

For the Mallows measure, we do not believe that there is a straightforward analogue of the RSK
algorithm.
The reason for this is that the shape of the Young tableaux in the RSK algorithm does not determine
the number of inversions.
For progress on this difficult problem, see a result of Hohlweg \cite{Hohlweg}.

On the other hand, long after the original proof of Vershik, Kerov, Logan and Shepp,
there was a return to the original probabilistic approach of Hammersley \cite{Hammersley}.
Hammersley had originally asked about the length of the longest increasing subsequence,
as an example of an open problem with partial results, delived in an address directed primarily
to graduate students.
Using methods from interacting particle processes, Aldous and Diaconis
established a hydrodynamic limit of Hammersley's process, thereby giving an independent proof of Theorem \ref{prop:VKLSPermutation}.
This approach was also developed by Seppalainen \cite{Seppalainen}, Cator and Groeneboom \cite{CatorGroeneboom},
and others.
This allows different generalizations than the original RSK proof of Vershik and Kerov.
For instance, Deuschel and Zeitouni considered the length of the longest increasing subsequence
for IID random points in the plane \cite{DeuschelZeitouni,DeuschelZeitouni2}.
For us, their results are key.

On the other hand if, contrary to our expectations, there is a generalization of the RSK algorithm
which applies to Mallows distributed random permutations, that would allow the discoverer
to try to extend the powerful methods of Baik, Deift and Johansson.

The rest of the paper is devoted to the proof of Theorem \ref{thm:main}.
We begin by stating the key ideas.
This occupies Sections 2 through 6.
Certain important technical assumptions will be stated as lemmas.
These lemmas are independent of the main argument,
although the main argument relies on the lemmas.
The lemmas will be proved in Sections 7 and 8.

\section{A Boltzmann-Gibbs measure}
\label{sec:BoltzmannGibbs}

In a previous paper \cite{Starr} one of us proved the following result.
\begin{prop}
\label{lem:StarrOLD}
Suppose that the sequence $(q_n)_{n=1}^{\infty}$ has the limit $\beta = \lim_{n \to \infty} n (1-q_n)$.
For $n \in \N$, let $\pi(\omega) \in S_n$ be a $\textrm{Mallows}(n,q_n)$ random permutation.
For each $n \in \N$, consider the empirical measure 
$\tilde{\rho}_n(\cdot,\omega)$ on $\R^2$, such that
\begin{equation}
\notag
\tilde{\rho}_n(A,\omega)\, =\, \frac{1}{n} \sum_{k=1}^{n} \unity \left\{ 
\left(\frac{k}{n} , \frac{\pi_k(\omega)}{n}\right) \in A \right \}\, ,
\end{equation}
for each Borel set $A \subseteq \R^2$.  Note that 
$\tilde{\rho}_n(\cdot,\omega)$ is a random measure.  
Define the non-random measure ${\rho}_{\beta}$ on $\R^2$ by the formula
\begin{equation}
\label{eq:uDefin}
d{\rho}_{\beta}(x,y)\, =\, 
\frac{(\beta/2) \sinh(\beta/2) \unity_{[0,1]^2}(x,y)}{\big(e^{\beta/4} \cosh(\beta[x-y]/2) - e^{-\beta/4} \cosh(\beta[x+y-1]/2)\big)^2}\, dx\, dy\, .
\end{equation}
Then the sequence of random measures $\tilde{\rho}_n(\cdot,\omega)$ converges in distribution to the non-random
measure ${\rho}_{\beta}$, as $n \to \infty$,
where the convergence is in distribution, relative to the weak topology on Borel probability measures.
\end{prop}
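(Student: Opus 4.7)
The plan is to view $\mu_{n,q_n}$ as a Boltzmann-Gibbs measure with energy given by the inversion functional, at inverse temperature of order $n$, and to identify $\rho_\beta$ as the unique minimizer of the associated variational problem on permutons. Writing $q_n = 1 - \beta/n + o(1/n)$, so that $\log q_n = -\beta/n + o(1/n)$, and observing that $\inv(\pi)/n^2 = I(\tilde\rho_n(\pi,\cdot))$ with the measure-level inversion functional
$$I(\rho) = \iint_{[0,1]^4} \unity\{x<x',\ y>y'\}\, d\rho(x,y)\,d\rho(x',y'),$$
the Mallows weight becomes $\exp(-\beta n \cdot I(\tilde\rho_n))\cdot(1+o(1))$, which is a Gibbs tilt at speed $n$ of the uniform measure on $S_n$.

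First I would establish a large deviation principle for $\tilde\rho_n$ under $\mu_{n,1}$ at speed $n$, supported on the space of \emph{permutons} (Borel probability measures on $[0,1]^2$ with both marginals equal to Lebesgue), with rate function $H(\rho) = \int_{[0,1]^2} \rho(x,y) \log \rho(x,y)\, dx\,dy$. This is a doubly-stochastic Sanov-type statement that can be derived by counting, via Stirling's formula, the number of permutations whose empirical measure fills a refined grid of $[0,1]^2$ with prescribed cell counts consistent with the marginal constraints. By Varadhan's lemma, since the tilt $-\beta n I(\rho)$ is weakly continuous and linear in the LDP speed, the empirical measure under $\mu_{n,q_n}$ satisfies an LDP with rate $F_\beta(\rho) = \beta I(\rho) + H(\rho)$ shifted by its infimum, and so concentrates on the minimizer of $\beta I + H$ on the permuton space, provided that minimizer is unique.

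To identify the minimizer, I would derive the Euler-Lagrange equation for $\beta I + H$ subject to the two marginal constraints. The variational derivative of $I$ at $(x,y)$ is the $\rho$-mass of the quadrants $\{x'>x,\,y'<y\}$ and $\{x'<x,\,y'>y\}$, so the stationarity condition takes the form
$$\log \rho(x,y) + 1 + \beta\,\Phi_\rho(x,y) = a(x) + b(y),$$
where $\Phi_\rho(x,y)$ is the sum of those two quadrant masses and $a, b$ are Lagrange multipliers for the marginal constraints. Differentiating once in $x$ and once in $y$ eliminates the multipliers and reduces the problem to a local nonlinear PDE in $\rho$; one verifies by direct substitution that the density in \ref{eq:uDefin} solves it and has uniform marginals, identifying $\rho_\beta$ as the minimizer.

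The main obstacle is controlling the nonlocal, indefinite quadratic $\beta I(\rho)$ against the entropy: one must show that $\beta I + H$ is lower semicontinuous and has a unique minimizer on the permuton space for every $\beta \in \R$, positive or negative, which requires a convexity or coercivity argument that goes beyond the entropy alone. An alternative route that I regard as a backup uses the one-at-a-time sequential construction of Mallows permutations or the exchangeable Mallows process of Gnedin-Olshanski to compute $\lim_n \E[\tilde\rho_n]$ together with a covariance bound for concentration; the limit of the expectation is then matched with \ref{eq:uDefin} by an explicit computation, bypassing the variational problem entirely.
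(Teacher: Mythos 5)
You should first be aware that the paper itself contains no proof of Proposition \ref{lem:StarrOLD}: it is imported from \cite{Starr}, where the Mallows measure is treated as a mean-field Boltzmann--Gibbs ensemble and the limiting density is obtained from a variational problem of essentially the kind you set up (an energy term plus an entropy term, with the Euler--Lagrange equation solved explicitly). So your outline parallels the actual source rather than anything in this paper, and the routine parts of it do check out: $\inv(\pi)=n^2 I(\tilde\rho_n)$, so the Mallows weight is $\exp(-\beta n I(\tilde\rho_n)+o(n))$; the Stirling/grid counting does give a speed-$n$ rate function $H(\rho)=\int\rho\log\rho$ on permutons; and differentiating your stationarity condition in $x$ and $y$ yields the Liouville-type equation $\partial_x\partial_y\log\rho=2\beta\rho$, which the density in (\ref{eq:uDefin}) does satisfy, with uniform marginals.

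The genuine gap is the concentration step. A tilted LDP only gives concentration on the \emph{set} of minimizers of $\beta I+H$, and the mechanism you hope will make that set a singleton---convexity or coercivity of $\beta I+H$---is not available: the quadratic form $\nu\mapsto\iint \unity\{(x-x')(y-y')<0\}\,d\nu\,d\nu$ is indefinite on differences of permutons. Indeed, with $\rho_{\mathrm{id}}$, $\rho_{\mathrm{rev}}$ the identity and reversal permutons and $\mathrm{Leb}$ Lebesgue measure on $[0,1]^2$, the form equals $0-2\cdot\tfrac13+\tfrac12=-\tfrac16$ at $\nu=\rho_{\mathrm{id}}-\mathrm{Leb}$ and $1-2\cdot\tfrac23+\tfrac12=+\tfrac16$ at $\nu=\rho_{\mathrm{rev}}-\mathrm{Leb}$, so $I$ is neither convex nor concave on the permuton space, and for large $|\beta|$ the strict convexity of $H$ cannot be expected to compensate; since the proposition is claimed for every $\beta\in\R$, no soft convexity argument can close this. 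Uniqueness has to be extracted from the Euler--Lagrange (mean-field) equation itself, together with positivity and regularity of minimizers, and note that your reduction to the local PDE discards the multipliers $a,b$ and the marginal constraints, so even there uniqueness of the constrained solution still requires proof; this identification-and-uniqueness analysis is precisely the substantive content of \cite{Starr}. Two further points would need attention even granted uniqueness: Varadhan's lemma does not apply verbatim because the inversion kernel is discontinuous (repairable, since any measure with finite rate is a permuton with a density, but it needs an argument), and the backup route you mention (sequential construction plus covariance bounds) is a pointer rather than an argument.
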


We will reformulate Lemma \ref{lem:StarrOLD}, using a Boltzmann-Gibbs measure
for a classical spin system.  The underlying spins take values in $\R^2$.
We define a two body Hamiltonian interaction $h : \R^2 \to \R$ as
$$
h(x,y)\, =\, \unity\{xy<0\}\, .
$$
Then the $n$ particle Hamiltonian function is $H_n : (\R^2)^n \to \R$,
$$
H_n((x_1,y_1),\dots,(x_n,y_n))\, =\, \frac{1}{n-1} \sum_{i=1}^{n-1} \sum_{j=i+1}^{n} h(x_i-x_j,y_i-y_j)\, .
$$
One also needs an {\em a priori} measure $\alpha$ which is a Borel probability measure
on $\R^2$.
Given all this, the Boltzmann-Gibbs measure on $(\R^2)^n$ with ``inverse-temperature''
$\beta \in \R$ is defined as 
$\mu_{n,\alpha,\beta}$,
$$
d\mu_{n,\alpha,\beta}((x_1,y_1),\dots,(x_n,y_n))\,
=\, \frac{\exp\big(-\beta H_n((x_1,y_1),\dots,(x_n,y_n))\big)\, \prod_{i=1}^{n} d\alpha(x_i,y_i)}
{Z_n(\alpha,\beta)}
$$
where the normalization, known as the ``partition function'' is 
$$
Z_n(\alpha,\beta)\, =\, 
\int_{(\R^2)^n} \exp\big(-\beta H_n((x_1,y_1),\dots,(x_n,y_n))\big)\, \prod_{i=1}^{n} d\alpha(x_i,y_i)\, .
$$
Usually in statistical physics one only considers positive temperatures,
corresponding to $\beta\geq 0$.
But we will also consider $\beta \leq 0$, because it makes mathematical sense and is an interesting
parameter range to study.

A special situation arises when the {\em a priori} measure 
$\alpha$ is a product measure of two one-dimensional measures without atoms.
If $\lambda$ and $\kappa$ are Borel probability measures on $\R$ without atoms,
then
\begin{equation}
\label{eq:equiv}
\begin{split}
\mu_{n,\lambda\times \kappa,\beta}\Big\{((x_1,y_1),\dots,(x_n,y_n)) \in &(\R^2)^n\, : \, 
\exists i_1<\dots<i_k\, ,\\
&\hspace{-5mm} \text{ such that }\
(x_{i_j}-x_{i_\ell})(y_{i_j} - y_{i_\ell}) > 0\ \textrm{ for all $j\neq \ell$} \Big\} \\
&\hspace{5mm}
=\, \mu_{n,\exp(-\beta/(n-1))}(\{ \pi \in S_n \, : \, \ell(\pi) \geq k \})\, ,
\end{split}
\end{equation}
for each $k$.
This follows from the definitions.
In particular, the condition for an increasing subsequence of a permutation $i_1<\dots<i_k$
is that if $i_j < i_\ell$ then we must have $\pi_{i_j} < \pi_{i_\ell}$.
For the variables $(x_1,y_1),\dots,(x_n,y_n)$ replacing the permutation, we obtain
the condition listed above. 

We will also use results from \cite{DeuschelZeitouni} by Deuschel and Zeitouni.
They define the record length of $n$ points in $\R^2$ as 
\begin{equation}
\label{eq:RecordDefinition}
\ell((x_1,y_1),\dots,(x_n,y_n))\, =\, \max \{ k \, : \, \exists i_1<\dots<i_k\, ,\
(x_{i_j}-x_{i_\ell})(y_{i_j} - y_{i_\ell}) > 0\ \textrm{ for all $j<\ell$ } \}\, .
\end{equation}
Equation (\ref{eq:equiv}) says that the distribution of $\ell((X_1(\omega),Y_1(\omega),\dots,X_n(\omega),Y_n(\omega)))$
with respect to the Boltzmann-Gibbs measure $\mu_{n,\lambda \times \kappa,\beta}$
is equal to the distribution of $\ell(\pi(\omega))$ with respect to the 
$\operatorname{Mallows}(n,\exp(-\beta/(n-1)))$ measure $\mu_{n,\exp(-\beta/(n-1))}$.

Using the equivalence and Lemma \ref{lem:StarrOLD}, we may also deduce a weak convergence result
for the measures $\mu_{n,\lambda\times \kappa,\beta}$.
In fact there is a special choice of measure for $\lambda$ and $\kappa$,
depending on $\beta$, which makes the limit nice.

For each $\beta \in \R \setminus \{0\}$ define
$$
L(\beta)\, =\, [(1-e^{-\beta})/\beta]^{1/2}\, ,
$$
and define $L(0)=1$.
Define the Borel probability $\lambda_{\beta}$ on $\R$ by the formula
\begin{equation}
\notag
d\lambda_{\beta}(x)\, =\, \frac{L(\beta) \unity_{[0,L(\beta)]}(x)}{1-\beta L(\beta) x}\, dx\, ,
\end{equation}
for $\beta\neq 0$, and $d\lambda_0(x) = \unity_{[0,1]}(x)\, dx$.
Also define a measure $\sigma_{\beta}$ on $\R^2$ by the formula
$$
d\sigma_{\beta}(x,y)\, =\, \frac{\unity_{[0,L(\beta)]^2}(x,y)}{(1-\beta x y)^2}\, dx\, dy\, .
$$
Both the $x$ and $y$ marginals of 
$\sigma_{\beta}$ are equal to the one-dimensional measure $\lambda_{\beta}$.
Using this, the next lemma follows from Lemma \ref{lem:StarrOLD} 
and the strong law of large numbers.  In fact, the strong law implies that an 
empirical measure arising from i.i.d. samples always converges in distribution 
to the underlying measure, relative to the weak topology on measures.
\begin{lemma}
\label{lem:StarrLimit}
For $n \in \N$,
let $((X_{n,1}(\omega),Y_{n,k}(\omega)),\dots,(X_{n,n}(\omega),Y_{n,n}(\omega)))$ be distributed according to 
the Boltzmann-Gibbs measure
$\mu_{n,\lambda_{\beta}\times \lambda_{\beta},\beta}$, where we used the special {\em a priori}
measure just constructed.
Define
the random empirical measure $\tilde{\sigma}_n(\cdot,\omega)$ on $\R^2$, such that
$$
\tilde{\sigma}_{n}(A,\omega)\, :=\, \frac{1}{n} \sum_{i=1}^{n} \unity \{ (X_{n,i}(\omega),Y_{n,i}(\omega)) \in A \} \, ,
$$
for each Borel measurable set $A \subseteq \R^2$.
Then the sequence of random measures $(\tilde{\sigma}_n(\cdot,\omega))_{n=1}^{\infty}$ converges in distribution to the non-random measure $\sigma_{\beta}$, in the limit $n \to \infty$,
where the convergence in distribution is relative to the topology of weak convergence
of Borel probability measures.
\end{lemma}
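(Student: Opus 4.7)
The plan is to reduce the statement to Proposition~\ref{lem:StarrOLD} via the coupling behind (\ref{eq:equiv}), together with the strong law of large numbers applied to order statistics. Given a Boltzmann-Gibbs sample $((X_1,Y_1),\dots,(X_n,Y_n))$ from $\mu_{n,\lambda_\beta\times\lambda_\beta,\beta}$, I associate to it the pattern $\pi\in S_n$: after relabeling so that $X_1<X_2<\dots<X_n$, let $\pi_i$ be the rank of $Y_i$ among the $Y_j$'s. Using $\inv(\pi)=\#\{i<j:(x_i-x_j)(y_i-y_j)<0\}$ and the exchangeability/atomlessness of $\lambda_\beta^{\otimes n}\otimes\lambda_\beta^{\otimes n}$, the joint density $\tfrac{1}{Z}q_n^{\inv(\pi)}\,d\lambda_\beta^{\otimes n}\,d\lambda_\beta^{\otimes n}$ decomposes as follows: $\pi$ is $\operatorname{Mallows}(n,q_n)$-distributed with $q_n=e^{-\beta/(n-1)}$, and \emph{conditionally on $\pi$} the sorted coordinates $X_{(1)}<\dots<X_{(n)}$ and $Y_{(1)}<\dots<Y_{(n)}$ are independent of each other, each being distributed as the order statistics of $n$ i.i.d.\ $\lambda_\beta$ samples.

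Under this decomposition $\tilde{\sigma}_n=\tfrac{1}{n}\sum_{k=1}^n\delta_{(X_{(k)},Y_{(\pi_k)})}$. Let $F_\beta$ be the CDF of $\lambda_\beta$; since $\lambda_\beta$ has a continuous, strictly positive density on $[0,L(\beta)]$, the quantile function $F_\beta^{-1}\colon[0,1]\to[0,L(\beta)]$ is continuous. The Glivenko-Cantelli theorem applied to i.i.d.\ $\lambda_\beta$ samples gives $\max_k|X_{(k)}-F_\beta^{-1}(k/n)|\to 0$ almost surely, and the same for the $Y_{(k)}$'s. Setting $\Phi(s,t)=(F_\beta^{-1}(s),F_\beta^{-1}(t))$, this yields, for every bounded Lipschitz $f\colon\R^2\to\R$,
$$
\Big|\int f\,d\tilde{\sigma}_n-\int(f\circ\Phi)\,d\tilde{\rho}_n\Big|\to 0\quad\text{in probability},
$$
where $\tilde{\rho}_n$ is the empirical measure from Proposition~\ref{lem:StarrOLD}.

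I would then invoke Proposition~\ref{lem:StarrOLD} and the continuous mapping theorem: since $\tilde{\rho}_n\to\rho_\beta$ weakly in distribution and $\Phi$ is continuous, $\int(f\circ\Phi)\,d\tilde{\rho}_n\to\int f\,d(\Phi_\#\rho_\beta)$ in distribution, so $\tilde{\sigma}_n\to\Phi_\#\rho_\beta$ in distribution. It remains to verify the density identity $\Phi_\#\rho_\beta=\sigma_\beta$ by a direct change-of-variables: substituting $(s,t)=(F_\beta(u),F_\beta(v))$ in (\ref{eq:uDefin}) and multiplying by the Jacobian (the product of the $\lambda_\beta$ densities) should simplify to $(1-\beta uv)^{-2}\unity_{[0,L(\beta)]^2}(u,v)$ after using hyperbolic-function identities and the explicit form $F_\beta(u)=-\beta^{-1}\log(1-\beta L(\beta)u)$. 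This algebraic reduction is the main computational hurdle; the case $\beta=0$ is trivial because $\Phi$ is then the identity and $\rho_0=\sigma_0$ are both Lebesgue measure on $[0,1]^2$.
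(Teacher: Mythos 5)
Your proposal is correct and is essentially the paper's own argument spelled out in detail: the paper likewise reduces the lemma to Proposition~\ref{lem:StarrOLD} plus the strong law (the empirical/Glivenko--Cantelli fact for the i.i.d.\ order statistics), using that both marginals of $\sigma_\beta$ equal $\lambda_\beta$. The one step you defer, the identity $\Phi_\#\rho_\beta=\sigma_\beta$, does check out: with $e^{-\beta F_\beta(u)}=1-\beta L(\beta)u$ one finds $1-\beta uv = (1-e^{-\beta})^{-1}\bigl(e^{-\beta x}+e^{-\beta y}-e^{-\beta}-e^{-\beta(x+y)}\bigr)$ and $e^{-\beta x}+e^{-\beta y}-e^{-\beta}-e^{-\beta(x+y)}=2e^{-\beta/4}e^{-\beta(x+y)/2}\bigl(e^{\beta/4}\cosh(\beta[x-y]/2)-e^{-\beta/4}\cosh(\beta[x+y-1]/2)\bigr)$, which together with $2\sinh(\beta/2)=e^{\beta/2}-e^{-\beta/2}$ gives exactly the density in (\ref{eq:uDefin}) times the Jacobian.
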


We could have also chosen a different {\em a priori} measure to obtain convergence to the same measure
$\rho_{\beta}$ from Lemma \ref{lem:StarrOLD}.
But we find the new measure $\sigma_{\beta}$ to be a nicer parametrization.
We may re-parametrize the measures like this by changing the {\em a priori} measure.
The ability to re-parametrize the measures will also be useful later.

\section{Deuschel and Zeitouni's record lengths}

In  \cite{DeuschelZeitouni},
Deuschel and Zeitouni proved the following result.  We thank Janko Gravner for 
bringing this result to our attention.  
\begin{thm}[Deuschel and Zeitouni, 1995]
\label{thm:DeuschelZeitouni}
Suppose that $u$ is a density on the box $[a_1,a_2]\times [b_1,b_2]$,
i.e., $d\alpha(x,y) = u(x,y)\, dx\, dy$ is a probability measure on the box $[a_1,a_2]\times [b_1,b_2]$.
Also suppose that $u$ is differentiable in $(a_1,a_2)\times (b_1,b_2)$ and the derivative is continuous up to the boundary.
Finally, suppose there exists a constant $c>0$ such that
$$
u(x,y) \geq c\, ,
$$
for all $(x,y) \in [a_1,a_2] \times [b_1,b_2]$.
Let $(U_1,V_1),(U_2,V_2),\dots$ be i.i.d., $\alpha$-distributed
random vectors in $[a_1,a_2]\times [b_1,b_2]$.
Then the rescaled random record lengths,
\begin{equation}
n^{-1/2} \ell((U_1,V_1),\dots,(U_n,V_n))\, ,
\end{equation}
converge in distribution to a non-random number $\mathcal{J}^*(u)$ defined as follows.
Let $\mathcal{C}^1_{\nearrow}([a_1,a_2]\times [b_1,b_2])$ be the set of all
$\mathcal{C}^1$ curves from $(a_1,b_1)$ to $(a_2,b_2)$ whose tangent
line has positive (and finite) slope at all points.
For $\gamma \in \mathcal{C}^1_{\nearrow}([a_1,a_2]\times [b_1,b_2])$ and any
$\mathcal{C}^1$ parametrization $(x(t),y(t))$, define
\begin{equation}
\label{eq:JDefin}
\mathcal{J}(u,\gamma)\, =\, 
2 \int_{\gamma} \sqrt{u(x(t),y(t))\, x'(t)\, y'(t)}\, dt\, .
\end{equation}
This is parametrization independent.
Then
$$
\mathcal{J}^*(u)\, 
=\, \sup_{\gamma \in \mathcal{C}^1_{\nearrow}([a_1,a_2]\times [b_1,b_2])}
\mathcal{J}(u,\gamma)\, .
$$
\end{thm}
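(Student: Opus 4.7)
The plan is to establish $n^{-1/2}\ell \to \mathcal{J}^*(u)$ in probability by matching upper and lower bounds, built on the uniform case (Proposition~\ref{prop:VKLSPermutation}) as a black box, applied to small subrectangles on which the density $u$ is nearly constant.

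For the lower bound, fix $\gamma \in \mathcal{C}^1_{\nearrow}$ with a parametrization $(x(t),y(t))$, $t\in[0,1]$, and a mesh $0=t_0<\cdots<t_M=1$. Enclose each arc $\gamma|_{[t_{k-1},t_k]}$ in an axis-aligned rectangle $R_k$ of dimensions $\Delta x_k\approx x'(t_k)\Delta t_k$ by $\Delta y_k \approx y'(t_k)\Delta t_k$, chosen so that the $R_k$ are strictly ordered in the northeast partial order, so any concatenation of increasing subsequences, one from each $R_k$, is itself increasing. Conditioning on the count $N_k$ of sample points inside $R_k$, those points are i.i.d.\ with density proportional to $u|_{R_k}$, which by continuity is close to uniform. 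Rescaling $R_k$ to the unit square and applying Proposition~\ref{prop:VKLSPermutation} produces an increasing subsequence in $R_k$ of length at least $(2-\epsilon)\sqrt{N_k}$ with high probability; the law of large numbers gives $N_k \approx n\,u(x(t_k),y(t_k))\,\Delta x_k \Delta y_k$, so concatenating yields an increasing subsequence of total length at least
$$
(2-\epsilon)\sqrt{n}\sum_{k=1}^{M} \sqrt{u(x(t_k),y(t_k))\,x'(t_k)\,y'(t_k)}\,\Delta t_k + o(\sqrt{n}),
$$
a Riemann sum converging to $(1-\epsilon/2)\calJ(u,\gamma)$. Taking $M\to\infty$ and $\gamma$ near-optimal gives $\liminf n^{-1/2}\ell \geq \mathcal{J}^*(u)$ in probability.

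For the upper bound, which is the main obstacle, I would partition $[a_1,a_2]\times[b_1,b_2]$ into a regular grid of $\delta$-cells. Every increasing subsequence of the sample traces a monotone northeast sequence $\Pi$ of cells, and its length is at most $\sum_{C\in\Pi}\ell_C$, where $\ell_C$ is the longest increasing subsequence among the $N_C$ sample points inside $C$. The key input needed is a matching VKLS upper tail of stretched-exponential form $\P(\ell_C \geq (2+\epsilon)\sqrt{N_C}) \leq \exp(-c\sqrt{N_C})$, classical via Poissonization and hook-length estimates, and since $u\geq c>0$ forces $N_C \gtrsim n\delta^2$ in every cell, these tails are strong enough to survive a union bound over the $\exp(O(\delta^{-1}\log\delta^{-1}))$ possible cell paths. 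For any surviving $\Pi$, summing yields $n^{-1/2}\ell \leq (2+\epsilon)\sum_{C\in\Pi}\sqrt{u(x_C,y_C)\,\delta^2}\,(1+o(1))$, and this is a Riemann sum bounded by $(1+\epsilon)\calJ(u,\gamma_\Pi)\leq(1+\epsilon)\mathcal{J}^*(u)$, where $\gamma_\Pi$ is a $\mathcal{C}^1_{\nearrow}$ curve tracing $\Pi$. The hypotheses $u\geq c>0$ and $u\in \mathcal{C}^1$ up to the boundary are precisely what make both the tail bound and the Riemann-sum error uniform in $\Pi$. Finally, parametrization independence of $\calJ(u,\gamma)$ is immediate: under any $\mathcal{C}^1$ reparametrization $t\mapsto s$ one has $\sqrt{x'(t)y'(t)}\,dt = \sqrt{x'(s)y'(s)}\,ds$, so the supremum in $\mathcal{J}^*$ is well defined on geometric curves.
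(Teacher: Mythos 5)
First, note that the paper does not prove this statement at all: it is quoted as Theorem 2 of Deuschel and Zeitouni (1995), so there is no internal proof to compare against; what you have written is a from-scratch proof sketch of the cited result. Your lower bound is essentially sound in outline (it is the standard strategy: ordered rectangles along a near-optimal curve, VKLS in each, Riemann sum), though it silently applies Proposition \ref{prop:VKLSPermutation} to points whose conditional density is only \emph{approximately} constant; the induced permutation is then not uniform, and you need an explicit thinning/superposition coupling (e.g.\ after Poissonization, sandwich the sample between uniform samples of intensities $n\min_{R_k}u$ and $n\max_{R_k}u$) before VKLS applies. That is a fixable omission.

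The upper bound, however, has a genuine gap. The decomposition $\ell \le \max_{\Pi}\sum_{C\in\Pi}\ell_C$ over full $\delta$-cells is correct as an inequality but too lossy, and the subsequent claim that $\sum_{C\in\Pi}\sqrt{u(x_C,y_C)}\,\delta$ is a Riemann sum bounded by $(1+\epsilon)\mathcal{J}(u,\gamma_\Pi)$ is false. Test it on $u\equiv 1$ on $[0,1]^2$: a monotone staircase $\Pi$ visits about $2/\delta$ cells, each containing $\approx n\delta^2$ points with $\ell_C\approx 2\delta\sqrt{n}$ w.h.p., so your bound gives $\approx 4\sqrt{n}$ while $\mathcal{J}^*(u)=2$; the factor $2$ does not disappear as $\delta\to 0$, because on the horizontal and vertical runs of the staircase the integrand $\sqrt{u\,x'y'}$ vanishes while the full-cell contributions do not. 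The missing idea is to record \emph{where} the increasing sequence crosses each cell boundary and to confine its portion in each cell to the sub-rectangle spanned by the entry and exit points: then the relevant sum is $\sum_k 2\sqrt{u\,\Delta x_k\,\Delta y_k}$ over a monotone family of sub-rectangles whose $x$-extents and $y$-extents each telescope to at most the side lengths, so $\sum_k\sqrt{\Delta x_k\Delta y_k}\le \tfrac12\sum_k(\Delta x_k+\Delta y_k)$ is controlled and the sum is a genuine $\tilde{\mathcal{J}}(u,\mathcal{P})$, hence $\le \mathcal{J}^*(u)$ up to discretization error. This crossing-interval bookkeeping is precisely what the paper's ``refined paths of boxes'' construction (the intervals $I_k$ and the rectangles $[a_{k-1},c_k]\times[b_{k-1},d_k]$ in Lemma \ref{lem:PathOfBoxes}, combined with Lemmas \ref{lem:boxes} and \ref{lem:USC}) is designed to supply in the Mallows setting, and an analogous device is needed in the i.i.d.\ setting; without it your upper bound does not close.
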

This is Theorem 2 in Deuschel and Zeitouni's paper.
The fact that $\mathcal{J}(u,\gamma)$ is parametrization independent
is useful.

We generalize their definition of $\mathcal{J}(u,\gamma)$ a bit, attempting to
mimic the definition of entropy made by Robinson and Ruelle in \cite{RobinsonRuelle}.
This is useful for establishing continuity properties of $\mathcal{J}$ and it allows us to drop the assumption
that $u$ is differentiable.

Given a box $[a_1,a_2]\times [b_1,b_2]$, 
we define $\Pi_n([a_1,a_2]\times [b_1,b_2])$ to be the set of all $(n+1)$-tuples
$\mathcal{P} = ((x_0,y_0),\dots,(x_n,y_n)) \in (\R^2)^{n+1}$ satisfying
$$
a_1 = x_0 \leq \dots \leq x_n = a_2 \quad \textrm{ and } \quad 
b_1 = y_0 \leq \dots \leq y_n = b_2 \, .
$$
We define
\begin{equation}
\label{eq:Jpart}
\tilde{\mathcal{J}}(u,\mathcal{P})\, =\, 2 \sum_{k=0}^{n-1} \left(\int_{x_k}^{x_{k+1}}
\int_{y_k}^{y_{k+1}} u(x,y)\, dx\, dy\right)^{1/2}\, .
\end{equation}
For later reference, we note the following continuity property of $\tilde{\mathcal{J}}(u,\mathcal{P})$  as a function of $u$
for a fixed $\mathcal{P}$.
Suppose that $u$ and $v$ are nonnegative functions in 
$\mathcal{C}([a_1,a_2] \times [b_1,b_2])$.
Using the simple fact that $|a - b| \leq \sqrt{|a^2-b^2|}$,
for all $a,b\geq 0$, we see that
$$
|\tilde{\mathcal{J}}(u,\mathcal{P}) - \tilde{\mathcal{J}}(v,\mathcal{P})| \,
\leq \, 2 \sum_{k=0}^{n-1} \left(\int_{x_k}^{x_{k+1}}
\int_{y_k}^{y_{k+1}} |u(x,y)-v(x,y)|\, dx\, dy\right)^{1/2}\, .
$$
We define $\|u\|$ to be the supremum norm.
Using this and the Cauchy inequality,
\begin{equation}
\label{eq:Holder}
\begin{split}
|\tilde{\mathcal{J}}(u,\mathcal{P}) - \tilde{\mathcal{J}}(v,\mathcal{P})|\,
&\leq \, 2 \|u-v\|^{1/2}\, \sum_{k=0}^{n-1} \sqrt{(x_{k+1}-x_k)(y_{k+1}-y_k)}\\
&\leq \,  \|u-v\|^{1/2}\, \sum_{k=0}^{n-1} (x_{k+1}-x_k+y_{k+1}-y_k)\\
&=\,  \|u-v\|^{1/2}\, (a_2-a_1+b_2-b_1)\, .
\end{split}
\end{equation}
Now we state a technical lemma.

\begin{lemma}
\label{lem:USC}
Let 
$\mathcal{B}_{\nearrow}([a_1,a_2]\times [b_1,b_2])$ be the set of all 
connected sets $\Upsilon \subset [a_1,a_2]\times [b_1,b_2]$ containing $(a_1,b_1)$ and $(a_2,b_2)$, and having the property
that 
$(x_1-x_2)(y_1-y_2) \geq 0$
for all $(x_1,y_1), (x_2,y_2) \in \Upsilon$.
Define $\Pi_n(\Upsilon)$ to be the set of all 
$\mathcal{P} = ((x_0,y_0),\dots,(x_n,y_n))$ in $\Pi_n$ such that 
$(x_k,y_k) \in \Upsilon$ for each $k$,
and let $\Pi(\Upsilon) = \bigcup_{n=1}^{\infty} \Pi_n(\Upsilon)$.
Finally, define
$$
\tilde{\mathcal{J}}(u,\Upsilon)\, =\, \lim_{\epsilon \to 0}\,
\inf \left \{
\tilde{\mathcal{J}}(u,\mathcal{P}) \,
:\, \mathcal{P} \in \bigcup_{n=1}^{\infty} \Pi_n(\Upsilon)\, ,\ \|\mathcal{P}\|< \epsilon
\right \}
\, .
$$
Then $\tilde{\mathcal{J}}(u,\cdot)$ is an
upper semi-continuous function of 
$\mathcal{B}_{\nearrow}([a_1,a_2]\times [b_1,b_2])$, endowed with the Hausdorff metric.
\end{lemma}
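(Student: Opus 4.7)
The plan is to show directly that $\limsup_m \tilde{\mathcal{J}}(u,\Upsilon_m) \leq \tilde{\mathcal{J}}(u,\Upsilon)$ whenever $\Upsilon_m \to \Upsilon$ in the Hausdorff metric. First I would parametrize each $\Upsilon \in \mathcal{B}_{\nearrow}([a_1,a_2]\times [b_1,b_2])$ as a Jordan arc: the projection $\phi(x,y) = x+y$ is strictly monotone along $\Upsilon$, because $\phi(p) = \phi(q)$ together with the monotone comparability of $p,q \in \Upsilon$ forces $p = q$, so $\phi$ restricts to a continuous bijection $\Upsilon \to [a_1+b_1,a_2+b_2]$, and by compactness its inverse $\psi_\Upsilon$ is a homeomorphism. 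A standard compactness argument then shows $\psi_{\Upsilon_m} \to \psi_\Upsilon$ uniformly on $[a_1+b_1,a_2+b_2]$ whenever $\Upsilon_m \to \Upsilon$ in the Hausdorff metric.

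Given $\eta > 0$, the definition of $\tilde{\mathcal{J}}(u,\Upsilon)$ as an $\epsilon \to 0$ limit of infima supplies a small $\epsilon_1 > 0$ and a partition $\mathcal{P}^* = ((x_k^*,y_k^*))_{k=0}^N \in \Pi_N(\Upsilon)$ with $\|\mathcal{P}^*\| < \epsilon_1$ and $\tilde{\mathcal{J}}(u,\mathcal{P}^*) \leq \tilde{\mathcal{J}}(u,\Upsilon) + \eta/3$. Arrange it so that $(x_k^*,y_k^*) = \psi_\Upsilon(s_k)$ for evenly spaced $s_k$. For $m \geq M_1$ put $\mathcal{P}_m^* := (\psi_{\Upsilon_m}(s_k))_{k=0}^N$, which belongs to $\Pi_N(\Upsilon_m)$ by the monotonicity of $\psi_{\Upsilon_m}$. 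The rectangle-integral map $((x_k,y_k))_k \mapsto 2\sum_k (\int_{x_k}^{x_{k+1}}\!\int_{y_k}^{y_{k+1}} u)^{1/2}$ is continuous in its vertex arguments by continuity of $u$, so uniform convergence $\psi_{\Upsilon_m} \to \psi_\Upsilon$ gives $|\tilde{\mathcal{J}}(u,\mathcal{P}_m^*) - \tilde{\mathcal{J}}(u,\mathcal{P}^*)| \leq \eta/3$ for all $m \geq M_1$.

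The main obstacle is then to pass from this single partition to partitions of $\Upsilon_m$ of arbitrarily small mesh, in order to bound the $\epsilon \to 0$ infima in $\tilde{\mathcal{J}}(u,\Upsilon_m)$. Given $\epsilon > 0$, I would refine $\mathcal{P}_m^*$ by inserting additional points $\psi_{\Upsilon_m}(t_j)$ at evenly spaced parameter values to produce $\mathcal{P}_m^{*,\epsilon}$ with $\|\mathcal{P}_m^{*,\epsilon}\| < \epsilon$. The crucial estimate is that a single monotone split of a sub-rectangle $[x_k,x_{k+1}]\times[y_k,y_{k+1}]$ at an interior point with side lengths $\alpha = x-x_k$, $\beta = x_{k+1}-x$, $\gamma = y-y_k$, $\delta = y_{k+1}-y$ decomposes the rectangle-integral as $C = A + B + D$, with $A,B$ the two diagonal sub-rectangles on the curve's side and $D$ the off-diagonal complement; the identity $|D|^2 - 4|A||B| = (\alpha\delta - \beta\gamma)^2 \geq 0$ gives, for $u$ nearly constant on the sub-rectangle, $(\sqrt{A}+\sqrt{B})^2 \leq C$. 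The $u$-variation error per split is controlled by the modulus of continuity $\omega(u,\epsilon_1)$ via a Cauchy-Schwarz estimate like (\ref{eq:Holder}), and summing over all splits the total error is bounded by a constant times $\omega(u,\epsilon_1)^{1/2}(a_2-a_1+b_2-b_1)$. Choosing $\epsilon_1$ small enough that this total error is at most $\eta/3$ gives $\tilde{\mathcal{J}}(u,\mathcal{P}_m^{*,\epsilon}) \leq \tilde{\mathcal{J}}(u,\mathcal{P}_m^*) + \eta/3 \leq \tilde{\mathcal{J}}(u,\Upsilon) + \eta$ for every $\epsilon > 0$ and $m \geq M_1$.

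Since for every $\epsilon > 0$ we have exhibited a partition of $\Upsilon_m$ with mesh below $\epsilon$ and value at most $\tilde{\mathcal{J}}(u,\Upsilon) + \eta$, the infimum in the definition of $\tilde{\mathcal{J}}(u,\Upsilon_m)$ satisfies the same bound for every $\epsilon > 0$. Letting $\epsilon \to 0$ gives $\tilde{\mathcal{J}}(u,\Upsilon_m) \leq \tilde{\mathcal{J}}(u,\Upsilon) + \eta$ for $m \geq M_1$, and since $\eta > 0$ was arbitrary the upper semi-continuity of $\tilde{\mathcal{J}}(u,\cdot)$ on $\mathcal{B}_{\nearrow}([a_1,a_2]\times[b_1,b_2])$ follows.
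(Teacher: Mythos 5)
Your proposal is correct, and it takes a genuinely different — and in fact more self-contained — route than the paper. The paper's proof introduces $\tilde{\mathcal{J}}_{\epsilon}(u,\Upsilon)=\inf\{\tilde{\mathcal{J}}(u,\mathcal{P}):\mathcal{P}\in\Pi(\Upsilon),\ \|\mathcal{P}\|<\epsilon\}$ and shows, for each fixed $\epsilon$, that it is upper semi-continuous by transferring a single near-optimal partition of $\Upsilon$ to partitions of nearby sets $\Upsilon^{(k)}$ and invoking continuity of $\tilde{\mathcal{J}}(u,\cdot)$ on $\Pi_n$; it does not explicitly address the passage to the $\epsilon\to 0$ limit, which is the delicate point since $\tilde{\mathcal{J}}_{\epsilon}$ increases as $\epsilon\downarrow 0$ and an increasing limit (supremum) of upper semi-continuous functions need not be upper semi-continuous. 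Your argument attacks $\tilde{\mathcal{J}}(u,\Upsilon_m)$ directly: you parametrize every $\Upsilon\in\mathcal{B}_{\nearrow}$ monotonically by $s=x+y$ (the inverse is even equi-Lipschitz, since the $x$- and $y$-increments are nonnegative and sum to the $s$-increment, which also gives the uniform convergence $\psi_{\Upsilon_m}\to\psi_{\Upsilon}$ you assert), transfer a near-optimal coarse partition of mesh $\epsilon_1$ to $\Upsilon_m$, and then refine it to arbitrarily small mesh while controlling the increase: for constant density the split inequality $(\sqrt{A}+\sqrt{B})^2\le C$ (equivalently, Cauchy--Schwarz applied within each coarse rectangle) shows refinement cannot increase the sum, and the deviation of $u$ from its piecewise-constant approximation contributes at most a constant times $\omega(u,\epsilon_1)^{1/2}(a_2-a_1+b_2-b_1)$ by the estimate (\ref{eq:Holder}). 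This extra refinement step is exactly what is needed to bound the small-mesh infima on $\Upsilon_m$ by the value on $\Upsilon$, so your route proves the lemma as stated, whereas the paper's written argument only establishes semi-continuity of the fixed-$\epsilon$ functionals. The points you leave as sketches (uniform convergence of the parametrizations, the bookkeeping of the modulus-of-continuity error, and the fact that neither the $s_k$ nor the inserted $t_j$ actually need to be evenly spaced — only that the parameter gaps, hence the mesh, be small) are routine to fill in.
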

If $\Upsilon$ is the range of a 
curve $\gamma \in \mathcal{C}^1_{\nearrow}([a_1,a_2]\times [b_1,b_2])$,
then
$\tilde{\mathcal{J}}(u,\Upsilon) = \mathcal{J}(u,\gamma)$
because for each partition $\mathcal{P} \in \Pi(\Upsilon)$,
the quantity $\tilde{\mathcal{J}}(u,\mathcal{P})$ just gives a
Riemann sum approximation to the integral in $\mathcal{J}(u,\gamma)$.

Now, let us denote the density of $\sigma_{\beta}$ as
\begin{equation}
\label{eq:Definitionubeta}
u_{\beta}(x,y)\, =\, \frac{\unity_{[0,L(\beta)]^2]}(x,y)}{(1-\beta xy)^2}\, .
\end{equation}
Then we may prove the following variational calculation.
\begin{lemma}
\label{lem:variational}
For any $\Upsilon \in \mathcal{B}_{\nearrow}([0,L(\beta)]^2)$,
$$
\tilde{\mathcal{J}}(u_{\beta},\Upsilon)\,
\leq\, \int_0^{L(\beta)} \frac{2}{1-\beta t^2}\, dt\, =\, \mathcal{L}(\beta)\, .
$$
\end{lemma}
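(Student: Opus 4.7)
The plan is to bound $\tilde{\mathcal{J}}(u_\beta,\mathcal{P})$ for each $\mathcal{P}=((x_0,y_0),\dots,(x_n,y_n)) \in \Pi(\Upsilon)$ by a Riemann sum that converges to $\mathcal{L}(\beta)$ as $\|\mathcal{P}\| \to 0$, and then invoke the liminf definition of $\tilde{\mathcal{J}}(u_\beta,\Upsilon)$. On each rectangle $R_k=[x_k,x_{k+1}]\times[y_k,y_{k+1}]$ I estimate $\int_{R_k} u_\beta\,dxdy \leq (\sup_{R_k} u_\beta)(x_{k+1}-x_k)(y_{k+1}-y_k)$. Since $u_\beta(x,y)=(1-\beta xy)^{-2}$ is monotone in $xy$, the sup is at $(x_{k+1},y_{k+1})$ when $\beta>0$ and at $(x_k,y_k)$ when $\beta<0$, so the argument splits into two cases with different natural parametrizations of $[0,L(\beta)]$.

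For $\beta \geq 0$ I set $\sigma_k := (x_k+y_k)/2$. The arithmetic--geometric mean inequality gives $2\sqrt{(x_{k+1}-x_k)(y_{k+1}-y_k)} \leq 2(\sigma_{k+1}-\sigma_k)$ and $x_{k+1}y_{k+1} \leq \sigma_{k+1}^2$, so
$$
2\sqrt{\int_{R_k} u_\beta\,dxdy} \,\leq\, \frac{2(\sigma_{k+1}-\sigma_k)}{1-\beta\sigma_{k+1}^2}.
$$
Summing on $k$ produces the right Riemann sum for $\int_0^{L(\beta)} 2(1-\beta\sigma^2)^{-1}\,d\sigma = \mathcal{L}(\beta)$ over the partition $0=\sigma_0 \leq \cdots \leq \sigma_n = L(\beta)$, whose mesh is controlled by $\|\mathcal{P}\|$; hence the sum converges to $\mathcal{L}(\beta)$ as $\|\mathcal{P}\| \to 0$.

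For $\beta < 0$ this parametrization is inadequate since $xy\leq \sigma^2$ now points the wrong way for the denominator. Instead I introduce $\tau_k := \sum_{j<k}\sqrt{(x_{j+1}-x_j)(y_{j+1}-y_j)}$, so $2\sqrt{(x_{k+1}-x_k)(y_{k+1}-y_k)} = 2(\tau_{k+1}-\tau_k)$. Iterating the Minkowski-type inequality $\sqrt{(a+c)(b+d)} \geq \sqrt{ab} + \sqrt{cd}$ starting from $(x_0,y_0)=(0,0)$ yields the crucial bound $\sqrt{x_ky_k} \geq \tau_k$, so $1+|\beta|x_ky_k \geq 1+|\beta|\tau_k^2$, and
$$
\tilde{\mathcal{J}}(u_\beta,\mathcal{P}) \,\leq\, \sum_k \frac{2(\tau_{k+1}-\tau_k)}{1+|\beta|\tau_k^2}.
$$
This is a left Riemann sum for $\int_0^{\tau_n} 2(1+|\beta|\tau^2)^{-1}\,d\tau$. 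Cauchy--Schwarz gives $\tau_n \leq \sqrt{x_n y_n} = L(\beta)$, and positivity of the integrand then shows the limit is at most $\mathcal{L}(\beta)$.

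In both cases $\tilde{\mathcal{J}}(u_\beta,\mathcal{P}) \leq \mathcal{L}(\beta) + o(1)$ as $\|\mathcal{P}\| \to 0$, and taking the infimum over partitions of small mesh followed by $\epsilon \to 0$ in the definition of $\tilde{\mathcal{J}}(u_\beta,\Upsilon)$ delivers the lemma. The main obstacle is that the obvious substitution $xy\leq((x+y)/2)^2$ (which works for $\beta \geq 0$) points the wrong way for $\beta<0$, precisely because the density $u_\beta$ is then concentrated away from the diagonal; the Minkowski-type inequality $\sqrt{x_ky_k} \geq \tau_k$ is what supplies a tractable \emph{lower} bound on $xy$ in that regime, and one can check that equality $\sqrt{x_ky_k}=\tau_k$ holds along the diagonal, confirming that the estimate is tight.
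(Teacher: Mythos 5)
Your proof is correct, and it takes a genuinely different route from the paper's. The paper proves the bound for curves: using parametrization independence of $\mathcal{J}(u_\beta,\gamma)$ it imposes the time parametrization $x(t)y(t)=t^2$, differentiates this constraint, and applies Cauchy's inequality with $\epsilon=y(t)/t$ to get $\sqrt{x'(t)y'(t)}\le 1$, hence $\mathcal{J}(u_\beta,\gamma)\le\int_0^{L(\beta)}2(1-\beta t^2)^{-1}dt$; it then passes from piecewise-$\mathcal{C}^1$ curves to general $\Upsilon\in\mathcal{B}_{\nearrow}$ by the piecewise-linear approximation argument borrowed from the proof of Lemma \ref{lem:boxes}. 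You instead estimate the partition functional $\tilde{\mathcal{J}}(u_\beta,\mathcal{P})$ directly, splitting on the sign of $\beta$: for $\beta\ge 0$ the variable $\sigma_k=(x_k+y_k)/2$ with AM--GM (both $\sqrt{\Delta x\,\Delta y}\le\Delta\sigma$ and $x_{k+1}y_{k+1}\le\sigma_{k+1}^2$, legitimate since $1-\beta\sigma^2\ge e^{-\beta}>0$ on the square), and for $\beta<0$ the variable $\tau_k=\sum_{j<k}\sqrt{\Delta x_j\,\Delta y_j}$ with the inequality $\sqrt{(a+c)(b+d)}\ge\sqrt{ab}+\sqrt{cd}$ yielding $\sqrt{x_ky_k}\ge\tau_k$ and, by Cauchy--Schwarz, $\tau_n\le L(\beta)$. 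In both cases the resulting one-sided Riemann sums exceed the limiting integral by at most a constant times the mesh (the integrand is monotone and bounded), so $\tilde{\mathcal{J}}(u_\beta,\mathcal{P})\le\mathcal{L}(\beta)+C\|\mathcal{P}\|$ uniformly over partitions, and the $\lim_{\epsilon\to0}\inf$ definition of $\tilde{\mathcal{J}}(u_\beta,\Upsilon)$ finishes the proof. Conceptually your $\sigma_k$ and $\tau_k$ are the two one-sided discrete surrogates for the paper's exact reparametrization $x(t)y(t)=t^2$: one needs an upper bound on $xy$ when $\beta>0$ and a lower bound when $\beta<0$, which is exactly why your argument bifurcates where the paper's does not. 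What your route buys: it is entirely elementary (no calculus on curves, no smoothness hypotheses) and applies verbatim to every $\Upsilon\in\mathcal{B}_{\nearrow}([0,L(\beta)]^2)$, so it does not need the approximation step via Lemma \ref{lem:boxes}. What the paper's route buys: the continuous constraint makes the optimality of the diagonal transparent (equality exactly when $\sqrt{x'y'}=1$), which in your version appears only as the closing consistency check $\sqrt{x_ky_k}=\tau_k$ on the diagonal.
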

Let us quickly verify the lemma in the special case $\beta=0$.
We have set $L(0)=1$ and we know that $u_0$ is identically 1 on
the rectangle $[0,1]^2$.
By equation (\ref{eq:Holder}), we know that
$$
\tilde{\mathcal{J}}(u_0,\mathcal{P})\, \leq\, 2\, ,
$$
by comparing $u=u_0$ with $v=0$.
That means that $\tilde{\mathcal{J}}(u_0,\Upsilon) \leq 2$ for every choice of $\Upsilon$.
It is easy to see that taking $\Upsilon = \{(t,t)\, :\, 0\leq t\leq 1\}$,
which is the graph of the straight line curve $\gamma$ parametrized by $x(t)=y(t)=t$
for $0\leq t\leq 1$,
$$
\tilde{\mathcal{J}}(u_0,\Upsilon)\, =\, \mathcal{J}(u_0,\gamma)\, =\, 2 \int_0^1 
\sqrt{u_0(x(t),y(t)) x'(t) y'(t)}\, dt\, =\, 2\, .
$$
Therefore, using Deuschel and Zeitouni's theorem, this shows that the straight
line is the optimal path for the case of a constant density on a square.

This lemma in general is proved using basic inequalities, as above, 
combined with the fact that $\mathcal{J}(u,\gamma)$
is parametrization independent, which allows us to reparametrize time
for any curve $(x(t),y(t))$.
As with the other lemmas, we prove this in Section 7 at the end of the paper.


\section{Coupling to IID point processes}

Now, suppose that $\beta$ is fixed, and consider a triangular array of random vectors in $\R^2$,
$$
((X_{n,k},Y_{n,k})\, :\, n \in \N\, ,\ 1\leq k\leq n)\\, ,
$$ 
where for each $n \in \N$, the 
random variables $(X_{n,1},Y_{n,1}),\dots,(X_{n,n},Y_{n,n})$ are distributed
according to the Boltzmann-Gibbs measure $\mu_{n,\lambda_{\beta}\times \lambda_{\beta},\beta}$.
We know that 
$$
\mu_{n,\exp(-\beta/(n-1))}\{\ell(\pi)=k\}\, =\,
{\bf P}
\{\ell((X_{n,1},Y_{n,1}),\dots,(X_{n,n},Y_{n,n})) = k\}\, ,
$$
for each $k$.
We also know that the empirical measure associated to $((X_{n,1},Y_{n,1}),\dots,(X_{n,n},Y_{n,n}))$
converges to the special measure $\sigma_{\beta}$.
It is natural to try to apply Deuschel and Zeitouni's Theorem 
\ref{thm:DeuschelZeitouni}, even though the points  $(X_{n,1},Y_{n,1}),\dots,(X_{n,n},Y_{n,n})$
are not i.i.d., a requirement for the random variables 
$(U_1,V_1),\dots,(U_n,V_n)$ of their theorem.

It is useful to generalize our perspective slightly.
Let us suppose that $\lambda$ and $\kappa$ are general Borel probability measures on $\R$
without atoms, and let us consider a triangular array of random vectors in $\R^2$:
$((X_{n,k},Y_{n,k})\, :\, n \in \N\, ,\ 1\leq k\leq n)$, where for each $n \in \N$, the 
random variables $(X_{n,1}(\omega),Y_{n,1}(\omega)),\dots,(X_{n,n}(\omega),Y_{n,n}(\omega))$ are distributed
according to the Boltzmann-Gibbs measure $\mu_{n,\lambda\times \kappa,\beta}$.
Let us define the random non-normalized, integer valued Borel measure $\xi_n(\cdot,\omega)$ on $\R^2$, by
\begin{equation}
\label{eq:Definitionxin}
\xi_n(A,\omega)\, =\, \sum_{i=1}^{n} \unity \{ (X_{n,i}(\omega),Y_{n,i}(\omega)) \in A \}\, ,
\end{equation}
This is a {\em random point process}.

A general point process is a random, locally finite, nonnegative integer valued measure.
We will restrict attention to finite point processes.
Therefore, let $\mathcal{X}$ denote the set of all Borel measures $\xi$ on $\R^2$
such that $\xi(A) \in \{0,1,\dots \}$ for each Borel measurable set $A \subseteq \R^2$.
Then, almost surely, $\xi_n(\cdot,\omega)$ is in $\mathcal{X}$.
In fact $\xi_n(\R^2,\omega)$ is a.s.\ just $n$.
For a general random point process, the total number of points may be random.
\begin{definition}
\label{def:nu}
Let $\nu_{n,\lambda\times \kappa,\beta}$ be the Borel probability measure on $\mathcal{X}$
describing the distribution of the random element $\xi_n(\cdot,\omega) \in \mathcal{X}$
defined in (\ref{eq:Definitionxin}), where $(X_{n,1}(\omega),Y_{n,1}(\omega)),\dots,(X_{n,n}(\omega),Y_{n,n}(\omega))$ are distributed
according to the Boltzmann-Gibbs measure $\mu_{n,\lambda\times \kappa,\beta}$.
\end{definition}
Given a measure $\xi \in \mathcal{X}$, we extend the definition of the record length to
\begin{equation}
\label{eq:DefinitionRecordPointProcess}
\begin{split}
\ell(\xi)\, =\, \max \{ k \, &: \, \exists (x_1,y_1),\dots,(x_k,y_k) \in \R^2\ \text { such that }\\
& \xi(\{(x_1,y_1),\dots,(x_k,y_k)\}) \geq k\ \text{ and }
(x_i-x_j)(y_i-y_j) \geq 0 \text{ for all } i,j \}\, .
\end{split}
\end{equation}
With this definition, 
\begin{equation}
\label{eq:LBoldL}
\ell(\xi_n(\cdot,\omega))\, =\, \ell((X_{n,1}(\omega),Y_{n,1}(\omega)),\dots,(X_{n,n}(\omega),Y_{n,n}(\omega)))\, ,
\end{equation}
almost surely.

There is a natural order on measures.
If $\mu,\nu$ are two measures on $\R^2$, then let us say
$\mu \leq \nu$ if $\mu(A) \leq \nu(A)$, for each Borel set $A \subseteq \R^2$.
The function $\ell$ is monotone non-decreasing in the sense that if $\xi,\zeta$
are two measures in $\mathcal{X}$ then $\xi \leq \zeta \Rightarrow \ell(\xi) \leq \ell(\zeta)$.

\begin{lemma}
\label{lem:CouplingAboveBelow}
Suppose that $\lambda$ and $\kappa$ each have no atoms. Then for each $n \in \N$, the following holds.
\begin{itemize}
\item[(a)] There exists a pair of random point processes $\eta_n,\xi_n$, defined on the same probability space, such that $\eta_n \leq \xi_n$, a.s., and satisfying these conditions:
$\xi_n$ has distribution $\nu_{n,\lambda \times \kappa,\beta}$; there are i.i.d., Bernoulli-$p$ random variables $K_1,\dots,K_n$, for $p=\exp(-|\beta|)$,
and i.i.d., $\lambda \times \kappa$-distributed points $(U_1,V_1),\dots,(U_{K_1+\dots+K_n},V_{K_1+\dots+K_n})$, such that
$\eta_n(A) = \sum_{i=1}^{K_1+\dots+K_n} \unity \{ (U_i,V_i) \in A \}$.
\item[(b)] There exists a pair of random point processes $\xi_n,\zeta_n$, defined on the same probability space, such that $\xi_n \leq \zeta_n$, a.s., and satisfying these conditions: $\xi_n$ has distribution $\nu_{n,\lambda \times \kappa,\beta}$; there are i.i.d., geometric-$p$ random variables $N_1,\dots,N_n$, for $p=\exp(-|\beta|)$,
and i.i.d., $\lambda \times \kappa$-distributed points $(U_1,V_1),\dots,(U_{N_1+\dots+N_n},V_{N_1+\dots+N_n})$, such that
$\zeta_n(A) = \sum_{i=1}^{N_1+\dots+N_n} \unity \{ (U_i,V_i) \in A \}$.
\end{itemize}
\end{lemma}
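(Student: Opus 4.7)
The plan is to build both couplings through a sequential sampling of the Boltzmann--Gibbs configuration, exploiting a pointwise bound on the conditional density of each new point. Set $p = e^{-|\beta|}$ and $q = e^{-\beta/(n-1)}$, so that $\exp(-\beta H_n(\bx)) = q^{D(\bx)}$ with $D$ the total number of discordant pairs. The key estimate is that for each $i$, the conditional density $\phi_i(x_i,y_i \mid (x_1,y_1),\dots,(x_{i-1},y_{i-1}))$ of $(X_i,Y_i)$ with respect to $\lambda\times\kappa$ under $\mu_{n,\lambda\times\kappa,\beta}$ satisfies $p \le \phi_i \le 1/p$ pointwise. To see this, decompose $D(\bx)$ according to whether each pair's two indices lie below, at, or above $i$; after cancelling between $f_i$ and $f_{i-1}$ the factor that depends only on $\bx_{<i}$, the sequential conditional reduces to
$$
\phi_i(x_i,y_i\mid\bx_{<i}) \;=\; \frac{q^{d_i^{<i}(\bx_{\le i})}\, H_i(\bx_{\le i})}{\int q^{d_i^{<i}}\, H_i\, d(\lambda\times\kappa)(x_i,y_i)}\,,
$$
where $d_i^{<i}\in\{0,\dots,i-1\}$ counts discordances between index $i$ and the past, and $H_i(\bx_{\le i}) = \int q^{D_{>i}+d_i^{>i}+d^{<i,>i}}\,d\bx_{>i}$ with $d_i^{>i}\in\{0,\dots,n-i\}$ and $d^{<i,>i}$ independent of $(x_i,y_i)$. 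Bounding $q^{d_i^{>i}}$ uniformly in $\bx_{>i}$ shows that the numerator varies in $(x_i,y_i)$ by a multiplicative factor of at most $q^{\pm((i-1)+(n-i))} = q^{\pm(n-1)} = p^{\mp 1}$; since the denominator is the $\lambda\times\kappa$-average of the numerator, the ratio is pinned into $[p,1/p]$.

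With this estimate, part (a) is a Bernoulli decomposition. Generate $(X_1,Y_1),\dots,(X_n,Y_n)$ sequentially. At step $i$, since $\phi_i \ge p$, write $\phi_i = p\cdot 1 + (1-p)\psi_i$ where $\psi_i = (\phi_i-p)/(1-p)$ is a probability density on $\R^2$. Introduce an independent Bernoulli$(p)$ variable $K_i$: if $K_i = 1$ draw $(X_i,Y_i) \sim \lambda\times\kappa$; otherwise draw $(X_i,Y_i) \sim \psi_i$. Either way the conditional distribution of $(X_i,Y_i)$ is $\phi_i$, so $\xi_n = \sum_i\delta_{(X_i,Y_i)} \sim \nu_{n,\lambda\times\kappa,\beta}$. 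The $K_i$'s are i.i.d.\ Bernoulli$(p)$ by construction, and conditionally on $(K_1,\dots,K_n)$ the accepted points $\{(X_i,Y_i):K_i=1\}$ are i.i.d.\ $\lambda\times\kappa$, because each was sampled from $\lambda\times\kappa$ independently of the earlier randomness. Setting $\eta_n = \sum_{i:K_i=1}\delta_{(X_i,Y_i)}$ completes part (a).

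For part (b), use the upper bound $\phi_i \le 1/p$. At step $i$, sample i.i.d.\ $\lambda\times\kappa$ candidates $(U_{i,1},V_{i,1}),(U_{i,2},V_{i,2}),\dots$, and accept each candidate $(u,v)$ with probability $p\,\phi_i(u,v\mid\bx_{<i})\in[0,1]$, stopping at the first acceptance and letting $(X_i,Y_i)$ be that candidate. The per-candidate acceptance probability averages to $p\int\phi_i\,d(\lambda\times\kappa) = p$, so the number $N_i$ of candidates used in step $i$ is Geometric$(p)$ and is independent across $i$; a routine accept/reject calculation gives $(X_i,Y_i)\sim\phi_i$. Taking $\zeta_n$ to be the sum of Dirac masses over every candidate at every step yields $\xi_n \le \zeta_n$ almost surely, with $\zeta_n$ consisting of $N_1+\dots+N_n$ points that are jointly i.i.d.\ $\lambda\times\kappa$. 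The principal technical hurdle is the sharp conditional-density bound from the first paragraph: the bookkeeping required to isolate the $(x_i,y_i)$-independent factor $d^{<i,>i}$ and thereby secure the $[p,1/p]$ constants (rather than the $[p^2,1/p^2]$ that naive per-point counting would give) is where the work lies; once that is done, both couplings follow by the same decomposition argument.
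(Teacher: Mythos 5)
Your proposal is correct, and it reaches the stated couplings by a genuinely different route than the paper. The paper never works with the sequential (filtration) conditionals $\phi_i(\cdot\mid \bx_{<i})$: instead it bounds the \emph{full} conditionals $\tilde{\alpha}_{n,k}(\cdot\,;(x_1,y_1),\dots,(x_n,y_n))$ of one point given \emph{all} the others, where the bound $e^{-|\beta|}\leq d\tilde{\alpha}_{n,k}/d\alpha\leq e^{|\beta|}$ is immediate because the conditional Hamiltonian $H_{n,k}$ oscillates by at most $1$; the price is that turning these full conditionals into a joint coupling requires the machinery of regular conditional probability distributions, a single-point coupling lemma, and an iterative resampling (Gibbs-sweep) construction in which coordinate $k$ is redrawn at stage $k$ and kept thereafter, plus an optional-stopping remark to see that the auxiliary points are i.i.d. Your version shifts the work: the $[p,1/p]$ bound on the sequential conditional density requires the marginalization bookkeeping you describe (the cross terms $d^{<i,>i}$ drop out, the interaction of point $i$ with the past contributes at most $q^{\pm(i-1)}$ and with the integrated-out future at most $q^{\pm(n-i)}$ inside the integral, and dividing by the $\lambda\times\kappa$-average of the numerator pins the ratio), and this computation is verifiably correct; once it is in hand, the couplings themselves are simpler and more transparent than the paper's — the mixture decomposition $\phi_i=p\cdot 1+(1-p)\psi_i$ gives part (a) with the accepted points manifestly conditionally i.i.d.\ given $(K_1,\dots,K_n)$, and rejection sampling with acceptance probability $p\,\phi_i\leq 1$ gives part (b) with the $N_i$ exactly geometric-$p$ and independent across steps, since the per-candidate acceptance probability integrates to $p$ regardless of the past. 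One caveat, which applies equally to the paper's own construction and statement: in part (b) the used candidates are i.i.d.\ only in the sense that they are the initial segment (up to the random time $N_1+\dots+N_n$, which is \emph{not} independent of the points) of an infinite i.i.d.\ $\lambda\times\kappa$ sequence; your phrase ``jointly i.i.d.'' should be read that way, exactly as the paper's Doob-stopping remark reads its own construction, and this is the form in which the lemma is later used in Corollary \ref{cor:ProbabilityBounds}. In short, the paper buys a trivial density estimate at the cost of a more elaborate coupling construction; you buy a trivial coupling construction at the cost of a sharper computation for the sequential conditionals, and both land on the same constant $p=e^{-|\beta|}$.
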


We may combine this lemma with the weak law of large numbers and the Vershik and Kerov, Logan and Shepp theorem, to conclude the following:
\begin{corollary}
\label{cor:ProbabilityBounds}
Suppose that $(q_n)_{n=1}^{\infty}$ is a sequence such that $\lim_{n \to \infty} n(1-q_n)  = \beta \in \R$.
Then,
\begin{equation}
\notag
\lim_{n \to \infty} \mu_{n,q_n} \{ \pi \in S_n \, : \, 
n^{-1/2} \ell(\pi) \in (2 e^{-|\beta|/2} - \epsilon,2e^{|\beta|/2}+\epsilon) \}\, 
=\, 1\, ,
\end{equation}
for each $\epsilon>0$.
\end{corollary}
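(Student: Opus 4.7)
The plan is to use Lemma~\ref{lem:CouplingAboveBelow} to sandwich the Mallows record length between two functionals of i.i.d.\ point processes, and then apply the Vershik--Kerov--Logan--Shepp theorem (Proposition~\ref{prop:VKLSPermutation}) together with the law of large numbers.

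First I recast the Mallows statistic as a point-process record length. Fix any atomless Borel probability $\lambda = \kappa$ on $\R$ (for instance, Lebesgue measure on $[0,1]$) and define $\beta_n$ by $q_n = \exp(-\beta_n/(n-1))$, so that $\beta_n \to \beta$ by the hypothesis $n(1-q_n) \to \beta$. Equations~(\ref{eq:equiv}) and (\ref{eq:LBoldL}) then identify the law of $\ell(\pi)$ under $\mu_{n,q_n}$ with that of $\ell(\xi_n)$ where $\xi_n \sim \nu_{n,\lambda\times\kappa,\beta_n}$.

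Next, I invoke Lemma~\ref{lem:CouplingAboveBelow} at parameter $\beta_n$: it supplies, on a common probability space, random point processes $\eta_n \leq \xi_n \leq \zeta_n$ almost surely, where $\eta_n$ is the superposition of $M_n^- := K_1+\cdots+K_n$ i.i.d.\ $\lambda\times\kappa$-samples with $K_i$'s i.i.d.\ Bernoulli$(p_n)$, and $\zeta_n$ is the superposition of $M_n^+ := N_1+\cdots+N_n$ i.i.d.\ $\lambda\times\kappa$-samples with $N_i$'s i.i.d.\ geometric-$p_n$, where $p_n = \exp(-|\beta_n|) \to p := \exp(-|\beta|)$. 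Monotonicity of $\ell$ (noted just before the lemma) gives $\ell(\eta_n) \leq \ell(\xi_n) \leq \ell(\zeta_n)$.

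Conditional on $M_n^{\pm}$, the permutation of $y$-coordinates obtained by sorting the points by their $x$-coordinates is uniform on $S_{M_n^{\pm}}$, because $\lambda$ and $\kappa$ have no atoms; moreover $\ell$ of the point process equals the length of the longest increasing subsequence of that permutation. Proposition~\ref{prop:VKLSPermutation} therefore yields $\ell(\eta_n)/\sqrt{M_n^-} \to 2$ and $\ell(\zeta_n)/\sqrt{M_n^+} \to 2$ in probability, while the weak law of large numbers gives $M_n^-/n \to p$ and $M_n^+/n \to 1/p$ in probability. Multiplying, $\ell(\eta_n)/\sqrt{n} \to 2\sqrt{p} = 2e^{-|\beta|/2}$ and $\ell(\zeta_n)/\sqrt{n} \to 2/\sqrt{p} = 2e^{|\beta|/2}$, which sandwiches $\ell(\xi_n)/\sqrt{n}$ in $(2e^{-|\beta|/2}-\epsilon,\, 2e^{|\beta|/2}+\epsilon)$ with probability tending to $1$. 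The only delicate point is transferring Proposition~\ref{prop:VKLSPermutation} from deterministic to the random sample sizes $M_n^{\pm}$; since $M_n^{\pm}\to\infty$ in probability, this follows by a routine conditioning argument and is not a genuine obstacle. All the substance of the corollary is carried by Lemma~\ref{lem:CouplingAboveBelow}, which is used here as a black box.
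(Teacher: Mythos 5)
Your overall strategy --- couple via Lemma \ref{lem:CouplingAboveBelow}, use monotonicity of $\ell$, and feed the resulting i.i.d.\ point processes into Proposition \ref{prop:VKLSPermutation} together with the weak law of large numbers --- is the same as the paper's. But the one step you dismiss as ``routine'' is exactly where the argument is delicate, and as you have written it, it fails. You assert that conditional on $M_n^{\pm}$ the permutation induced by the coupled i.i.d.\ points is uniform on $S_{M_n^{\pm}}$. Lemma \ref{lem:CouplingAboveBelow}, used as a black box, does not give you this: it specifies the marginal structure of the counts and of the points but not their joint law, and the paper explicitly notes, in the course of this very proof, that $K_1,\dots,K_n$ and $N_1,\dots,N_n$ are \emph{not} independent of $(U_1,V_1),(U_2,V_2),\dots$. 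Worse, in the coupling that actually produces part (b) (Lemmas \ref{lem:Coupling} and \ref{lem:CouplingGeneral}), conditioning on the geometric variables reveals which auxiliary Bernoulli trials failed, hence which of the points were drawn from the auxiliary measure $\check{\alpha}$ rather than being genuinely $\lambda\times\kappa$-distributed; given the counts, the points are independent but not identically distributed, so the induced permutation need not be conditionally uniform. Since the $\zeta_n$ side is precisely the one that bounds $\ell(\xi_n)$ from above, your claim $\ell(\zeta_n)/\sqrt{M_n^+}\to 2$ is left without justification, and ``conditioning on $M_n^+$'' is not the repair --- it is the source of the problem.

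The paper's fix is to avoid conditioning on the random totals altogether. Extend $(U_i,V_i)$ to an infinite i.i.d.\ $\lambda\times\kappa$ sequence by adjoining fresh independent points beyond the random index; then for any \emph{deterministic} $m$ the first $m$ points induce an exactly uniform permutation of $S_m$, no matter how the counts depend on the points. Monotonicity of $\ell$ then gives deterministic inclusions: on $\{N_1+\cdots+N_n\le m_n^+\}$ one has $\ell(\xi_n)\le\ell(\zeta_n)\le \ell\big((U_1,V_1),\dots,(U_{m_n^+},V_{m_n^+})\big)$, and on $\{K_1+\cdots+K_n\ge m_n^-\}$ one has $\ell(\xi_n)\ge\ell(\eta_n)\ge \ell\big((U_1,V_1),\dots,(U_{m_n^-},V_{m_n^-})\big)$. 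Choosing deterministic levels $m_n^+\approx n(e^{|\beta|}+\delta)$ and $m_n^-\approx n(e^{-|\beta|}-\delta)$, the weak law of large numbers makes the exceptional events asymptotically negligible, and Proposition \ref{prop:VKLSPermutation} applied at the deterministic sizes $m_n^{\pm}$ gives the constants $2e^{\pm|\beta|/2}$ up to $\epsilon$ once $\delta$ is taken small relative to $\epsilon$. With this substitution your argument becomes the paper's proof; your remaining reductions (the identification through (\ref{eq:equiv}) and (\ref{eq:LBoldL}), passing to $\beta_n\to\beta$, and the monotone sandwich) match it.
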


Let us quickly prove this corollary, conditional on previously stated lemmas whose proofs
will appear later.

\begin{proofof}{\bf Proof of Corollary \ref{cor:ProbabilityBounds}:}
Let $\beta_n$ be defined so that $\exp(-\beta_n/(n-1)) = q_n$.
Let $\pi \in S_n$ be a random permutation, distributed according to $\mu_{n,q_n}$,
and let $((X_{n,1},Y_{n,1}),\dots,(X_{n,n},Y_{n,n}))$
be distributed according to $\mu_{n,\lambda\times \kappa,\beta_n}$.
We have the equality in distribution of the random variables
$$
\ell((X_{n,1},Y_{n,1}),\dots,(X_{n,n},Y_{n,n}))\, 
\stackrel{\mathcal{D}}{=} \, \ell(\pi)\, ,
$$
as we noted in Section 2, before.
Note $\lim_{n \to \infty} n (1-q_n) = \beta$,
implies that $\lim_{n \to \infty} \beta_n = \beta$.

For a fixed $n$, we apply Lemma \ref{lem:CouplingAboveBelow}, but with $\beta$ replaced by $\beta_n$, to conclude that 
there are random point processes
$\eta_n(\cdot,\omega), \xi_n(\cdot,\omega) \in \mathcal{X}$ defined on the same probability space $\Omega$,
and separately, there are random point processes $\xi_n(\cdot,\omega), \zeta_n(\cdot,\omega) \in \mathcal{X}$, 
defined on the same probability space, satisfying the conclusions of that lemma but with $\beta$ replaced by $\beta_n$.
By (\ref{eq:LBoldL}), we know that
$$
\ell(\pi)\, \stackrel{\mathcal{D}}{=} \, \ell(\xi)\, .
$$
By monotonicity of $\ell$, and Lemma \ref{lem:CouplingAboveBelow} we know that for each $k$
\begin{equation}
\label{eq:CouplingAboveBelowk}
{\bf P}\{\ell(\eta) \geq k\}\, 
\leq\, {\bf P}\{\ell(\xi) \geq k\} 
\quad \text { and } \quad
{\bf P}\{\ell(\xi) \geq k\} \,
\leq \, {\bf P}\{\ell(\zeta) \geq k\}\, .
\end{equation}
Using equations (\ref{eq:equiv}) and  (\ref{eq:LBoldL}), this implies that for each $\epsilon>0$
\begin{gather*}
\mu_{n,q_n}\{ \pi \in S_n\, :\, n^{-1/2} \ell(\pi) \leq 2 e^{|\beta|/2}+\epsilon \}\,
\geq\, {\bf P}\{n^{-1/2} \ell(\zeta) \leq 2 e^{|\beta|/2}+\epsilon \}\, ,\\
\mu_{n,q_n}\{ \pi \in S_n\, :\, n^{-1/2} \ell(\pi) \geq 2 e^{-|\beta|/2}-\epsilon \}\,
\geq\, {\bf P}\{n^{-1/2} \ell(\eta) \geq 2 e^{-|\beta|/2}-\epsilon \}\, .
\end{gather*}
Since the $(U_i,V_i)$'s end at $i=K_1+\dots+K_n$ or $i=N_1+\dots+N_n$ in the two cases,
let us also define new i.i.d., $\lambda\times\kappa$-distributed points $(U_i,V_i)$ for all greater values of $i$.
We assume these are independent of everything else.
Then all $(U_i,V_i)$ are i.i.d., $\lambda\times \kappa$ distributed.
So, for any non-random number $m \in \N$,
the induced permutation $\pi_m \in S_m$, corresponding to $((U_1,V_1),\dots,(U_m,V_m))$
is uniformly distributed.

The random integers $K_1,\dots,K_n$ and $N_1,\dots,N_n$ from Lemma \ref{lem:CouplingAboveBelow} are not independent of $(U_1,V_1),(U_2,V_2),\dots$.
But, for instance, for any deterministic number $m$, conditioning on the event $\{\omega \in \Omega\, :\, K_1(\omega) + \dots + K_n(\omega) \leq m\}$, we have that
$$
\ell(\zeta)\, \leq\, \ell\big((U_1,V_1),\dots,(U_{m},V_{m})\big)\, ,
$$
by using monotonicity of $\ell$ again.
Therefore, for each $n \in \N$, and for any non-random number $M_n^+ \in \N$, we may bound
\begin{align*}
{\bf P}\{n^{-1/2} \ell(\zeta) \leq 2 e^{|\beta|/2}+\epsilon \}\, 
&\geq\, \mu_{M_n^+,1}\{ \pi \in S_{M_n^+}\, :\, n^{-1/2} \ell(\pi) \leq 2 e^{|\beta|/2}+\epsilon \}\\
&\qquad - {\bf P}(\{\omega\in \Omega\, :\, K_1(\omega)+\dots+K_n(\omega) > M_n^+\})\, .
\end{align*}
Similarly, for any non-random number $M_n^-$, we may bound
\begin{align*}
{\bf P}\{n^{-1/2} \ell(\zeta) \geq 2 e^{-|\beta|/2}-\epsilon \}\, 
&\geq\, \mu_{M_n^-,1}\{ \pi \in S_{M_n^-}\, :\, n^{-1/2} \ell(\pi) \geq 2 e^{-|\beta|/2}-\epsilon \}\\
&\qquad - {\bf P}(\{\omega \in \Omega\, :\, N_1(\omega)+\dots+N_n(\omega) < M_n^-\})\, .
\end{align*}
We choose $\delta$ such that $0<\delta<\epsilon$, and 
then we take sequences $M_n^+ = \lfloor n (e^{-|\beta|}+\delta) \rfloor$ and
$N_n^- = \lceil n (e^{|\beta|} - \delta) \rceil$.
Since  $K_1,K_2,\dots$ are i.i.d., Bernoulli random variables
with mean $e^{-|\beta|}$,
and $N_1,N_2,\dots$ are i.i.d., geometric random variables with mean $e^{|\beta|}$,
we may appeal to the weak law of large numbers to deduce
$$
\lim_{n \to \infty} {\bf P}(\{\omega\in \Omega\, :\, K_1(\omega)+\dots+K_n(\omega) > M_n^+\})\,
=\, \lim_{n \to \infty} {\bf P}(\{\omega \in \Omega\, :\, N_1(\omega)+\dots+N_n(\omega) < M_n^-\})\,
=\, 0\, .
$$

Finally,
by Proposition \ref{prop:VKLSPermutation}, we know that
\begin{multline*}
\liminf_{n \to \infty}  \mu_{M_n^+,1}\{ \pi \in S_{M_n^+}\, :\, n^{-1/2} \ell(\pi) \leq 2 e^{|\beta|/2}+\epsilon \}\\
\geq\, \liminf_{n \to \infty} \mu_{M_n^+,1}\left\{ \pi \in S_{M_n^+}\, :\, 
(M_n^+)^{-1/2} \ell(\pi) \leq 2 \frac{e^{|\beta|/2}+\epsilon}{e^{|\beta|/2}+\delta}\right\}
=\, 1\, ,
\end{multline*}
and
\begin{multline*}
\liminf_{n \to \infty}  \mu_{M_n^-,1}\{ \pi \in S_{M_n^-}\, :\, n^{-1/2} \ell(\pi) \geq 2 e^{-|\beta|/2}-\epsilon \}\\
\geq\, \liminf_{n \to \infty} \mu_{M_n^-,1}\left\{ \pi \in S_{M_n^-}\, :\, 
(M_n^-)^{-1/2} \ell(\pi) \geq 2 \frac{e^{-|\beta|/2}-\epsilon}{e^{-|\beta|/2}-\delta}\right\}
=\, 1\, .
\end{multline*}
\end{proofof}

The bounds in  Corollary \ref{cor:ProbabilityBounds} are useful for small values of $|\beta|$.
For larger values of $\beta$, they are useful when combined with the following easy lemma:
\begin{lemma}
\label{lem:TemperatureRenormalization}
Suppose $\lambda$ and $\kappa$ have no atoms,
and let the random point process $\xi \in \mathcal{X}$ be distributed according to $\nu_{n,\lambda\times \kappa,\beta}$.
Suppose that $R = [a_1,a_2] \times [b_1,b_2]$ is any rectangle.
Let $\xi \restriction R$ denote the restriction of $\xi$ to this rectangle:
i.e., $(\xi \restriction R)(A) = \xi(A \cap R)$.
Note that this is still a random point process in $\mathcal{X}$ but one with a random
total mass between $0$ and $n$.
Then, for any $m \in \{1,\dots,n \}$, and any $k \in \{1,\dots,m\}$, we have
\begin{equation}
{\bf P}(\{\ell(\xi \restriction R) = k\}\, |\, \{\xi(R)=m\})\,
=\, \mu_{m,q}\{ \pi \in S_m\, :\, \ell(\pi) = k\}\, ,
\end{equation}
for $q = \exp(-\beta/(m-1))$.
\end{lemma}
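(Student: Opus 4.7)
The plan is to identify the conditional distribution of the $m$ points of $\xi$ lying in $R$, given $\xi(R)=m$, with an $m$-particle Boltzmann-Gibbs distribution on $R$ at the same inverse temperature $\beta$, and then invoke the equivalence in (\ref{eq:equiv}) at the level of $m$ particles. That equivalence turns the statement about $\ell(\xi\restriction R)$ into the corresponding statement about $\ell(\pi)$ for $\pi$ of Mallows$(m,q)$ law with $q=\exp(-\beta/(m-1))$, which is exactly the lemma.

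The first reduction is via exchangeability: $\mu_{n,\lambda\times\kappa,\beta}$ is invariant under simultaneous permutation of the $n$ particle labels, so it is enough to condition on the specific event that indices $1,\ldots,m$ lie in $R$ while indices $m+1,\ldots,n$ lie in $R^c$. Under this conditioning I split the Hamiltonian as $H_n=H_m^{\mathrm{in}}+H^{\mathrm{cross}}+H_{n-m}^{\mathrm{out}}$, where the three terms involve respectively pairs of inside particles, one inside and one outside particle, and pairs of outside particles. Integrating the joint density over the outside configurations against $\prod_{j>m}d(\lambda\times\kappa)|_{R^c}$ together with the factors $\exp(-\beta H_{n-m}^{\mathrm{out}})$ and $\exp(-\beta H^{\mathrm{cross}})$ produces a conditional marginal density on the inside points proportional to $\exp(-\beta H_m^{\mathrm{in}})\cdot F((X_i,Y_i)_{i\le m})$ times the restriction of $(\lambda\times\kappa)^{\otimes m}$ to $R^m$, where $F$ encodes the integrated cross and outside-outside contributions.

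The crucial step is to show that $F$ is of product form $F=\prod_{i\le m}\tilde f(X_i)\tilde g(Y_i)$ for suitable one-dimensional functions $\tilde f$ on $[a_1,a_2]$ and $\tilde g$ on $[b_1,b_2]$. Granted this, $\tilde f$ and $\tilde g$ can be absorbed into modified atomless marginals $\tilde\lambda_R$ and $\tilde\kappa_R$ on the two sides of $R$, so that the conditional marginal of the inside particles is exactly the Boltzmann-Gibbs measure $\mu_{m,\tilde\lambda_R\times\tilde\kappa_R,\beta}$; applying (\ref{eq:equiv}) at $m$ particles then identifies the induced permutation of the inside points as Mallows$(m,q)$ with $q=\exp(-\beta/(m-1))$, and the claim about $\ell(\xi\restriction R)$ follows since $\ell$ depends only on this induced permutation.

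The main obstacle is the factorization of $F$. A natural line of attack is to decompose $R^c=\{X\notin[a_1,a_2]\}\cup\{Y\notin[b_1,b_2]\}$ by inclusion-exclusion and observe that, on each of the resulting strips, for any $(X_i,Y_i)\in R$ the indicator $h(X_i-X_j,Y_i-Y_j)$ collapses to a function of a single coordinate of the inside point (since one of $X_i-X_j$ or $Y_i-Y_j$ has a forced sign), so the integrals over $X_j$ and $Y_j$ separate against the product measure $\lambda\times\kappa$. A further subtlety is that the outside particles interact among themselves through $H_{n-m}^{\mathrm{out}}$, so the integration does not immediately factor as a product over $j>m$; one must argue that the outside-outside contribution produces only a multiplicative normalization independent of the inside configuration, so that it does not interfere with the separation of the inside coordinates.
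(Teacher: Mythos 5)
Your overall skeleton---exchangeability to fix which indices lie in $R$, splitting $H_n$ into inside, cross and outside parts, noting that for an outside point one coordinate's sign relative to any inside point of $R$ is forced so each cross term collapses to a one-coordinate function, absorbing these into modified atomless marginals, and then invoking (\ref{eq:equiv}) at the level of $m$ particles---is the same as the paper's. But your central identification is wrong: the conditional law of the $m$ inside particles is \emph{not} an $m$-particle Boltzmann--Gibbs measure at the same inverse temperature $\beta$. The pair coupling inherited from $H_n$ is $\beta/(n-1)$ per pair, whereas an $m$-particle measure at inverse temperature $\beta''$ couples pairs with strength $\beta''/(m-1)$; hence the conditional measure is $\mu_{m,\lambda'\times\kappa',\beta'}$ with $\beta'=(m-1)\beta/(n-1)$ (this renormalization is the point of the lemma's name), and (\ref{eq:equiv}) then gives the Mallows parameter $q=\exp(-\beta'/(m-1))=\exp(-\beta/(n-1))$, i.e.\ the same $q$ as the ambient $n$-point measure, not $\exp(-\beta/(m-1))$. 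The ``$m-1$'' in the displayed statement is a slip: the paper's own proof derives $\beta'=(m-1)\beta/(n-1)$, and the application in Section 6 uses the lemma with $\mu_{r,q_n}$, $q_n=\exp(-\beta/(n-1))$. By asserting ``same inverse temperature $\beta$'' you reproduce the misprint rather than the correct statement, and that step would not survive the computation.

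There is a second genuine gap, which you yourself flag as the main obstacle: the factorization $F=\prod_{i\le m}\tilde f(X_i)\tilde g(Y_i)$ after integrating out the outside particles is false in general, and the proposed strip-by-strip separation does not rescue it. Even a single outside point confined to the strip $\{Y_j<b_1\}$ contributes, after integration over its position, a factor of the form $\int e^{-\frac{\beta}{n-1}\,\#\{i\,:\,x_i<s\}}\,d\lambda(s)$ (up to constants), which is a genuinely joint symmetric function of $x_1,\dots,x_m$ and not a product over $i$; outside--outside interactions mix things further. The paper sidesteps this entirely by \emph{conditioning on} the outside configuration instead of integrating it out: given the outside points and the event that they avoid $R$, the cross term factors exactly, per inside point, into $e^{-\beta h_1(x)/(n-1)}\,e^{-\beta h_2(y)/(n-1)}$ (double counting from corner regions being constant on $R$ and absorbed into normalizations), which defines random product marginals $\lambda'\times\kappa'$; (\ref{eq:equiv}) is applied conditionally, and since the resulting Mallows law does not depend on the outside configuration, averaging over it is harmless. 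Alternatively one could note that your $F$ depends only on the multisets of inside $x$'s and $y$'s and not on their pairing, and condition on the order statistics; but some such argument is required---the product form you rely on is simply not available after integrating out.
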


In order to use this lemma, we introduce an idea we call ``paths of boxes.''

\section{Paths of boxes}

We now introduce a method to derive Deuschel and Zeitouni's Theorem \ref{thm:DeuschelZeitouni}
for our point process.
For each $n$ we decompose the unit square $[0,1]^2$ into $n^2$ sub-boxes
$$
R_n(i,j)\, =\, \left[\frac{i-1}{n},\frac{i}{n}\right] \times \left[\frac{j-1}{n},\frac{j}{n}\right]\, .
$$
We consider a basic path to be a sequence $(i_1,j_1),\dots,(i_{2n-1},j_{2n-1})$ such that
$(i_1,j_1)=(1,1)$, $(i_{2n-1},j_{2n-1}) = (n,n)$ and $(i_{k+1}-i_k,j_{k+1}-j_k)$ equals $(1,0)$ or $(0,1)$
for each $k=1,\dots,2n-2$.
In this case the basic path of boxes is the union $\bigcup_{k=1}^{2n-1} R_n(i_k,j_k)$.
Note that 
\begin{gather*}
(i_{k+1}-i_k,j_{k+1}-j_k)\, =\, (1,0)\quad \Rightarrow\quad 
R_n(i_k,j_k) \cap R_n(i_{k+1},j_{k+1})\, =\, \{i_k/n\} \times [(j_k-1)/n,j_k/n]\, ,\\
(i_{k+1}-i_k,j_{k+1}-j_k)\, =\, (0,1)\quad \Rightarrow\quad 
R_n(i_k,j_k) \cap R_n(i_{k+1},j_{k+1})\,  =\,[(i_k-1)/n,i_k/n] \cap \{j_k/n\}\, .
\end{gather*}
 
Now we consider a refined notion of path.
We are motivated by the fact that Deuschel and Zeitouni's $\mathcal{J}(u,\gamma)$ function does depend on the derivative of $\gamma$.
To get reasonable error bounds we must allow for a choice of slope for each segment of the path.
So, given $m \in \N$ and $n \in \{2,3,\dots \}$, we consider a set of ``refined'' paths $\Pi_{n,m}$ to be the set of all sequences
$$
\Gamma\, :=\, ((i_1,j_1),r_1,(i_2,j_2),r_2,(i_3,j_3),r_3,\dots,(i_{2n-2},j_{2n-2}),r_{2n-2},(i_{2n-1},j_{2n-1}))\, ,
$$
where $((i_1,j_1),(i_2,j_2),\dots,(i_{2n-1},j_{2n-1})$ is a basic path, as described in the last paragraph, and $r_1,r_2,\dots,r_{2n-2}$
are integers in $\{1,\dots,m\}$
satisfying the additional condition: if $i_{k} = i_{k+1} = i_{k+2}$ or if $j_k=j_{k+1}=j_{k+2}$ then $r_{k+1} \geq r_k$,
for each $k=1,\dots,2n-3$.
We now explain the importance of this condition.

Suppose that $R_n(i_k,j_k) \cap R_n(i_{k+1},j_{k+1}) = \{i_k/n\} \times [(j_k-1)/n,j_{k}/n]$.
Then we decompose this interval into $m$ subintervals 
$$
I_{n,m}^{(2)}(i_k;j_k,j_{k+1};r)\, =\, \left \{\frac{i_k}{n}\right\} \times \left[ \frac{j_k-1}{n} + \frac{r-1}{mn}, \frac{j_k-1}{n} + \frac{r}{m} \right]\, .
$$
Similarly, if $R_n(i_k,j_k) \cap R_n(i_{k+1},j_{k+1}) = [(i_k-1)/n,i_{k}/n] \times \{j_k/n\}$, then we define
$$
I_{n,m}^{(1)}(i_k,i_{k+1};j_k;r)\, =\, \left[ \frac{i_k-1}{n} + \frac{r-1}{m}, \frac{i_k-1}{n} + \frac{r}{mn} \right] \times 
\left \{\frac{j_k}{n}\right\}\, .
$$
In either case, the choice of $r_k$ is which subinterval the ``path'' passes through in going from $R_n(i_k,j_k)$ to $R_n(i_{k+1},j_{k+1})$.
We define $I_k$ to be $I_{n,m}^{(2)}(i_k;j_k,j_{k+1};r_k)$ or $I_{n,m}^{(1)}(i_k,i_{k+1};j_k;r_k)$ depending on which case it is.
We also define $(x_k,y_k)$ to be the center of the interval, either
$$
(x_k,y_k)\, =\, \left(\frac{i_k}{n},\frac{j_k-1}{n} + \frac{r-(1/2)}{mn}\right)\quad \textrm{or}\quad 
(x_k,y_k)\, =\, \left(\frac{i_k-1}{n} + \frac{r-(1/2)}{mn},\frac{j_k}{n}\right)\, .
$$
The additional condition that we require for a refined path just guarantees that $x_{k+1} \geq x_k$ and $y_{k+1} \geq y_k$ for each $k$.

We also define $(a_k,b_k) \in \R^2$ and $(c_k,d_k) \in \R^2$ to be the endpoints of the interval $I_k$.
With these definitions, we may state our main result for paths of boxes.

\begin{lemma}
\label{lem:PathOfBoxes}
Suppose that $\Gamma \in \Pi_{n,m}$ is a refined path.
Also suppose that $\xi \in \mathcal{X}$ is a point process with support in $[0,1]^2$,
such that no point lies on any line $\{(x,y)\, :\, x = i/n \}$ for $i \in \Z$
or any line $\{(x,y)\, :\, y=j/n \}$ for $j \in \Z$. 
Then
$$
\ell(\xi)\, \geq\, \sum_{k=1}^{2n-1} \ell(\xi \restriction [x_{k-1},x_k]\times [y_{k-1},y_k])\, ,
$$
where we define $(x_0,y_0)=(0,0)$ and $(x_{2n-1},y_{2n-1})=(1,1)$.
Also,
$$
\ell(\xi)\, \leq\, \max_{\Gamma \in \Pi_{n,m}} \sum_{k=1}^{2n-1} \ell(\xi \restriction [a_{k-1},c_k] \times [b_{k-1},d_k])\, ,
$$
where we define $(a_0,b_0)=(0,0)$ and $(c_{2n-1},d_{2n-1}) = (1,1)$.
\end{lemma}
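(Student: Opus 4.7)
Both inequalities are proved by constructing explicit increasing subsequences. For the lower bound, fix $\Gamma \in \Pi_{n,m}$ and let $B_k = [x_{k-1}, x_k] \times [y_{k-1}, y_k]$ for $k = 1, \ldots, 2n-1$. The monotonicity built into the definition of $\Pi_{n,m}$ guarantees $x_{k-1} \le x_k$ and $y_{k-1} \le y_k$, and each intermediate $(x_k, y_k)$ lies on a grid line $x = i_k/n$ or $y = j_k/n$, on which $\xi$ has no mass by hypothesis. Pick, for each $k$, an increasing subsequence in $\xi \restriction B_k$ of length $\ell(\xi \restriction B_k)$; the concatenation is still an increasing subsequence of $\xi$ in the sense of (\ref{eq:DefinitionRecordPointProcess}), because any point of $B_k$ is componentwise $\le (x_k, y_k)$ while any point of $B_{k+1}$ is componentwise $\ge (x_k, y_k)$. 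This gives the lower bound.

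For the upper bound, start from any increasing subsequence $p_1, \ldots, p_\ell$ of $\xi$ with $p_i = (u_i, v_i)$ and both $u_i$ and $v_i$ non-decreasing. Each $p_i$ lies in a unique grid box $R_n(a_i, b_i)$ (no point on a grid line), and $((a_i, b_i))_i$ is monotone in both coordinates. Extend the sequence of distinct visited boxes to a full basic path from $(1,1)$ to $(n,n)$ by arbitrary filling. Choose an ``L-shaped'' interpolating curve through the $p_i$ (going right then up between consecutive LIS points) and, for each step of the basic path, set $r_k$ equal to the index of the $m$-th subinterval of the shared edge containing the interpolating curve's crossing. For filler steps between boxes not both containing LIS points, the same rule yields $r_k$ values that interpolate monotonically between the LIS-determined ones, so the monotonicity constraint in the definition of $\Pi_{n,m}$ is respected: two consecutive same-direction steps of the basic path correspond to parallel edge-crossings of the interpolating curve at non-decreasing transverse coordinates.

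With this refined path $\Gamma$, I claim each $p_i$ lies in the extended box $[a_{k-1}, c_k] \times [b_{k-1}, d_k]$ for the basic-path box $R_n(i_k, j_k)$ containing it. By the endpoint convention (so that $a_{k-1} \le c_k$ and $b_{k-1} \le d_k$), the extended box is the smallest axis-aligned rectangle inside $R_n(i_k, j_k)$ containing both the entry interval $I_{k-1}$ and the exit interval $I_k$. In each of the four enter/exit configurations (enter-left/exit-right, enter-left/exit-up, enter-below/exit-right, enter-below/exit-up), any LIS point in $R_n(i_k, j_k)$ is coordinatewise sandwiched between the entry and exit crossings by LIS monotonicity, hence lies in the extended box. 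Consequently $\ell \le \sum_k \ell(\xi \restriction [a_{k-1}, c_k] \times [b_{k-1}, d_k])$, and maximizing over $\Gamma$ gives the upper bound. The main work is the four-case geometric verification, which requires careful bookkeeping of which endpoint of $I_k$ is labelled $(a_k, b_k)$ versus $(c_k, d_k)$ in each direction configuration; once this is in place everything follows from LIS monotonicity and the monotonicity constraint embedded in $\Pi_{n,m}$.
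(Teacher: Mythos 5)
The paper never actually carries out this proof: Section 5 promises it for Section 8, but Section 8 contains only the proof of Lemma \ref{lem:CouplingAboveBelow}, so there is no argument of record to compare yours against; I can only judge the proposal on its own terms, and on those terms it is essentially sound. The lower bound is exactly right: the refined-path condition makes the centers $(x_k,y_k)$ componentwise non-decreasing, so any point of the $k$-th inner box is componentwise at most $(x_k,y_k)$ and any point of the $(k+1)$-st is at least $(x_k,y_k)$; concatenation of restricted increasing subsequences is therefore increasing in the sense of (\ref{eq:DefinitionRecordPointProcess}), and the hypothesis that $\xi$ charges no grid line rules out double counting, since distinct inner boxes meet only on grid lines. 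For the upper bound, your sandwich observation is the heart of the matter, and it can be run uniformly with no four-case bookkeeping: if $q_{k-1}$ and $q_k$ are the crossings of the entry and exit edges by the monotone interpolating curve, and $r_{k-1}, r_k$ are chosen so that $q_{k-1}\in I_{k-1}$, $q_k\in I_k$, then for any LIS point $p$ in the $k$-th box monotonicity of the curve gives $q_{k-1}\leq p\leq q_k$ componentwise, while $(a_{k-1},b_{k-1})\leq q_{k-1}$ and $q_k\leq (c_k,d_k)$ simply because these are the lower-left endpoint of $I_{k-1}$ and the upper-right endpoint of $I_k$; hence $p\in[a_{k-1},c_k]\times[b_{k-1},d_k]$, and grouping the LIS points by the box containing them gives the bound before maximizing over $\Gamma$.

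The one step you should tighten is the treatment of ``filler'' steps. If you extend the sequence of LIS-visited boxes to a basic path by genuinely arbitrary filling, the path may enter or leave an LIS-containing box through an edge that the interpolating curve never crosses; then that $r$ is not curve-determined, and choosing it to salvage containment (an extreme subinterval) can clash with the constraint $r_{k+1}\geq r_k$ along a same-direction run whose other $r$'s are curve-determined. The clean fix is to take the basic path to be the full box itinerary of the curve extended from $(0,0)$ to $(1,1)$, so that every shared edge of the path is genuinely crossed and the monotonicity of the $r_k$'s is automatic from the monotonicity of the curve (your ``non-decreasing transverse coordinates'' remark). The only degeneracy is the curve passing exactly through a grid vertex; there one inserts a single detour box, which contains no LIS point, and assigns its two edges the extreme subintervals touching that vertex, which preserves both containment and the constraint. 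Note also that the horizontal and vertical segments of your L-shaped curve sit at coordinates of LIS points, hence off the grid lines, so each edge crossing is a single well-defined point. With these adjustments the argument is complete; the four-case endpoint bookkeeping you flag as ``the main work'' is not needed.
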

We will prove this lemma in Section 8, after we have proved the other lemmas,
since it requires several steps.

Another useful lemma follows:

\begin{lemma}
\label{lem:boxes}
Suppose that $u:[0,1]^2 \to \R$ is a probability density which is also continuous.
Then, 
$$
\max_{\Upsilon \in\mathcal{B}_{\nearrow}([0,1]^2)} 
\tilde{\mathcal{J}}(u,\Upsilon)\, 
=\, 2 \lim_{N \to \infty}\, \lim_{m \to \infty}\, \max_{\Gamma \in \Pi_{n,m}} \sum_{k=1}^{2N-1} 
\left(\int_{x_{k-1}}^{x_k} \int_{y_{k-1}}^{y_k} u(x,y)\, dx\, dy\right)^{1/2}\, .
$$
\end{lemma}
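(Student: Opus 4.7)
The plan is to prove the equality by establishing the two one-sided inequalities $\mathrm{RHS}\le\mathrm{LHS}$ and $\mathrm{RHS}\ge\mathrm{LHS}$.

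For $\mathrm{RHS}\le\mathrm{LHS}$: Any refined path $\Gamma\in\Pi_{N,m}$ has centers $(x_0,y_0)=(0,0),(x_1,y_1),\ldots,(x_{2N-1},y_{2N-1})=(1,1)$ that are monotone nondecreasing in both coordinates, so $\mathcal{P}(\Gamma):=((x_0,y_0),\ldots,(x_{2N-1},y_{2N-1}))$ belongs to $\Pi(\Upsilon_\Gamma)$, where $\Upsilon_\Gamma\in\mathcal{B}_{\nearrow}([0,1]^2)$ is the piecewise-linear path through the centers. The inner sum on the right equals $\tfrac{1}{2}\tilde{\mathcal{J}}(u,\mathcal{P}(\Gamma))$, and $\|\mathcal{P}(\Gamma)\|=O(1/N)$. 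Since $\Upsilon_\Gamma$ is piecewise $\mathcal{C}^1$, the paper already identifies $\tilde{\mathcal{J}}(u,\Upsilon_\Gamma)=\mathcal{J}(u,\gamma_\Gamma)$ for a $\mathcal{C}^1$ parametrization $\gamma_\Gamma$. Comparing segment by segment, both quantities equal $\sqrt{u(\text{midpoint}_k)\,\Delta x_k\,\Delta y_k}$ to leading order; the errors are controlled via \eqref{eq:Holder} by the modulus of continuity $\omega_u(1/N)$, and summing with $\sum_k\sqrt{\Delta x_k\,\Delta y_k}\le 1$ (Cauchy--Schwarz) the total error is $o(1)$ as $N\to\infty$, uniformly in $\Gamma$. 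Hence $\tilde{\mathcal{J}}(u,\mathcal{P}(\Gamma))\le \tilde{\mathcal{J}}(u,\Upsilon_\Gamma)+o(1)\le\max_{\Upsilon}\tilde{\mathcal{J}}(u,\Upsilon)+o(1)$, and taking $\max_\Gamma$ then the double limit gives $\mathrm{RHS}\le\mathrm{LHS}$.

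For $\mathrm{RHS}\ge\mathrm{LHS}$: Fix $\Upsilon\in\mathcal{B}_{\nearrow}$ and $\delta>0$. Since $\tilde{\mathcal{J}}(u,\Upsilon)=\lim_{\epsilon\downarrow 0}I(\epsilon)$ with $I(\epsilon):=\inf_{\|\mathcal{P}\|<\epsilon}\tilde{\mathcal{J}}(u,\mathcal{P})$ nondecreasing as $\epsilon\downarrow 0$, for sufficiently small $\epsilon$ every $\mathcal{P}^*\in\Pi(\Upsilon)$ with $\|\mathcal{P}^*\|<\epsilon$ satisfies $\tilde{\mathcal{J}}(u,\mathcal{P}^*)\ge I(\epsilon)\ge \tilde{\mathcal{J}}(u,\Upsilon)-\delta$. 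Fix such a $\mathcal{P}^*$. For $N$ with $1/N\ll\epsilon$, I would trace $\Upsilon$ through the grid boxes $R_N(i,j)$ to read off a basic path; at each grid-line crossing of $\Upsilon$ the quantized $r$-index (with $m$ large) places a refined-path center within $1/(mN)$ of the actual crossing. By inserting further ``refining micro-steps'' along $\Upsilon$ inside grid cells, one may arrange that $\mathcal{P}(\Gamma)$ contains points $O(1/N)$-close to all points of $\mathcal{P}^*$. A second application of \eqref{eq:Holder} and uniform continuity of $u$ yields $|\tilde{\mathcal{J}}(u,\mathcal{P}(\Gamma))-\tilde{\mathcal{J}}(u,\mathcal{P}^*)|\to 0$ as $N,m\to\infty$, so the inner sum is at least $\tfrac{1}{2}[\tilde{\mathcal{J}}(u,\Upsilon)-\delta]-o(1)$. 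Taking $\max_\Gamma$ and the double limit gives at least $\tilde{\mathcal{J}}(u,\Upsilon)-\delta$, and letting $\delta\downarrow 0$ and supremizing over $\Upsilon$ yields $\mathrm{LHS}\le\mathrm{RHS}$.

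The hard part is the constructive approximation in the second direction: realizing a prescribed fine partition $\mathcal{P}^*$ of $\Upsilon$ as (approximately) the set of centers of some $\Gamma\in\Pi_{N,m}$, despite the rigid constraint that those centers be midpoints of quantized subintervals of specific grid interfaces. The monotonicity ``additional condition'' on refined paths together with the freedom to send $m\to\infty$ first and $N\to\infty$ second should suffice, but formalizing the insertion of $\mathcal{P}^*$ into the refined-path skeleton requires case analysis based on whether consecutive points of $\mathcal{P}^*$ lie in the same grid cell, on the same interface, or in distinct cells, with the residual discrepancy absorbed through \eqref{eq:Holder} and the uniform continuity of $u$.
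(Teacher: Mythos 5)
Your overall plan is viable and is genuinely different from the paper's route: the paper sandwiches $u$ between the piecewise-constant envelopes $u_N^{\pm}$ on the grid squares, uses the constant-density computation following Lemma \ref{lem:variational} to see that optimizers for $u_N^{\pm}$ are piecewise straight lines whose only degrees of freedom are the interface crossing points (exactly what $\Pi_{N,m}$ discretizes as $m\to\infty$), and then concludes by uniform convergence $u_N^{\pm}\to u$ and the estimate (\ref{eq:Holder}); you instead compare the refined-path sums directly to $\tilde{\mathcal{J}}(u,\mathcal{P})$ for partitions along monotone sets. Your first inequality is essentially correct as sketched: the interface centers of $\Gamma$ form a monotone partition of mesh $O(1/N)$, and the comparison with the piecewise-linear interpolation costs only an error controlled by the modulus of continuity of $u$ times $\sum_k\sqrt{\Delta x_k\,\Delta y_k}\leq 1$, uniformly in $\Gamma$.

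The second direction, however, contains a genuine flaw as written. A refined path $\Gamma\in\Pi_{N,m}$ yields a partition $\mathcal{P}(\Gamma)$ with exactly one point per interface of the basic path; the definition of $\Pi_{N,m}$ provides no ``micro-steps'' inside grid cells, so you cannot make $\mathcal{P}(\Gamma)$ track a partition $\mathcal{P}^*$ that was fixed \emph{before} $N$ and may be much finer than the grid scale or have points interior to cells. More importantly, the inference from $O(1/N)$-closeness of the point sets to $|\tilde{\mathcal{J}}(u,\mathcal{P}(\Gamma))-\tilde{\mathcal{J}}(u,\mathcal{P}^*)|\to 0$ is false in general: two monotone partitions that are Hausdorff-close but have different numbers and placements of points can have very different values of $\tilde{\mathcal{J}}$ (for $u\equiv 1$, a partition following a staircase of step size much smaller than $1/N$ has value near $0$, while the grid-interface centers give value near $2$); this discontinuity is precisely why $\tilde{\mathcal{J}}(u,\Upsilon)$ is defined through an infimum. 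The repair is available from an observation you already made: since \emph{every} partition of $\Upsilon$ with mesh $<\epsilon$ has value at least $\tilde{\mathcal{J}}(u,\Upsilon)-\delta$, do not fix $\mathcal{P}^*$ in advance. After choosing $N$ with $C/N<\epsilon$, take $\mathcal{P}^*$ to be the crossing points of $\Upsilon$ with the grid lines (mesh $O(1/N)$), let the basic path be the cells traversed by $\Upsilon$, and choose the $r$-indices so that each center lies within $1/(2mN)$ of the corresponding crossing (the monotonicity constraint on the $r_k$ is automatic; passages through grid corners are handled by one nearly degenerate extra rectangle of diameter $O(1/(mN))$). Then $\mathcal{P}(\Gamma)$ and $\mathcal{P}^*$ have the same number of points, pairwise within $O(1/(mN))$, and the rectangle-by-rectangle symmetric-difference estimate in the spirit of (\ref{eq:Holder}) gives a total discrepancy $O(m^{-1/2})$, uniformly in $N$; letting $m\to\infty$, then $N\to\infty$, then $\delta\downarrow 0$ and taking the supremum over $\Upsilon$ completes the inequality.
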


We will prove this simple lemma in Section 7.
With these preliminaries done, we may now complete the proof of the theorem.

\section{Completion of the Proof}

Suppose that $\beta \in \R$ is fixed.
At first we will consider a fixed sequence $q_n = \exp(-\beta/(n-1))$, which does satisfy $n(1-q_n) \to \beta$ as $n \to \infty$.
Define the triangular array of random vectors in $\R^2$:
$((X_{n,k},Y_{n,k})\, :\, n \in \N\, ,\ 1\leq k\leq n)$, where for each $n \in \N$, the 
random variables $(X_{n,1},Y_{n,1}),\dots,(X_{n,n},Y_{n,n})$ are distributed
according to the Boltzmann-Gibbs measure $\mu_{n,\lambda\times \kappa,\beta}$.
Let $\xi_n \in \mathcal{X}$ be the random point process such that
$$
\xi_n(A)\, =\, \sum_{k=1}^{n} \unity \{ (X_{n,k},Y_{n,k}) \in A \} \, ,
$$
for each Borel measurable set $A \subseteq \R^2$.
As we have noted before, we then have
\begin{eqnarray}
\notag
\mu_{n,q_n}\{ \pi \in S_n \, : \, \ell(\pi) = k \} \,
&=&\, P\{ \ell((X_{n,1},Y_{n,1}),\dots,(X_{n,n},Y_{n,n})) = k \} \\
\notag
&=&\, P\{ \ell(\xi_n) = k \}\, ,
\end{eqnarray}
for each $k$.

Now suppose that $m, N \in \N$ are fixed.
We consider ``refined'' paths in $\Pi_{N,m}$.
By Lemma \ref{lem:PathOfBoxes}, which applies by first rescaling the unit square $[0,1]^2$ to $[0,L(\beta)]^2$,
\begin{equation}
\label{eq:lower}
\ell(\xi_n)\, \geq \, \max_{\Gamma \in \Pi_{N,m}} \sum_{k=1}^{2N-1} \ell(\xi_n \restriction [L(\beta) x_{k-1},L(\beta) x_k]\times [L(\beta) y_{k-1},L(\beta) y_k])\, .
\end{equation}
The only difference is that we use the square $[0,L(\beta)]^2$ in place of $[0,1]^2$.
Also, 
\begin{equation}
\label{eq:upper}
\ell(\xi_n)\, \leq \, \max_{\Gamma \in \Pi_{N,m}} \sum_{k=1}^{2N-1} \ell(\xi_n \restriction [L(\beta) a_{k-1},L(\beta) c_k] 
\times [L(\beta) b_{k-1}, L(\beta) d_k])\, .
\end{equation}
Now suppose that $\Gamma \in \Pi_{N,m}$ is fixed.
Also consider a fixed sub-rectangle of $\Gamma$,
$$
R_k\, =\, [L(\beta) x_{k-1},L(\beta) x_k]\times [L(\beta) y_{k-1},L(\beta) y_k]\, .
$$
By Lemma \ref{lem:StarrLimit}, we know that the random variables
$\xi_n(R_k)/n$ converge in probability to the non-random limit $\sigma_{\beta}(R_k)$,
as $n \to \infty$.
Moreover, conditioning on the total number of points in the sub-rectangle $\xi_n(R_k)$,
Lemma \ref{lem:TemperatureRenormalization} tells us that
$$
{\bf P}( \{\ell(\xi_n \restriction R_k) = \bullet\}\, |\, \{\xi_n(R_k) = r\})\, =\,
\mu_{r,q_n}\{\pi \in S_r\, :\, \ell(\pi) = \bullet \}\, .
$$
Note that the sequence of random variables $\xi_n(R_k) (1 - q_n)$ converges in probability
to $\beta \sigma_{\beta}(R_k)$ as $n\to \infty$, because
$$
\xi_n(R_k) (1 - q_n)\, =\, n(1-q_n)\, \frac{\xi_n(R_k)}{n}\, ,
$$
and $n(1-q_n) \to \beta$ as $n \to \infty$.
Therefore, using Corollary \ref{cor:ProbabilityBounds}, this implies for each $\epsilon>0$
$$
\lim_{n \to \infty} {\bf P}\left\{\xi_n(R_k)^{-1/2} \ell(\xi_n \restriction R)
\in (2 e^{-\beta \sigma_{\beta}(R_k)/2} - \epsilon, 2 e^{\beta \sigma_{\beta}(R_k)/2} + \epsilon)
\right\} \,
=\, 1\, .
$$
Since we have a limit in probability for $\xi_n(R_k)/n$, we may then conclude for each $\epsilon>0$ that
$$
\lim_{n \to \infty} {\bf P}\left\{ n^{-1/2} \ell(\xi_n \restriction R_k)
\in (2 [\sigma_{\beta}(R_k)]^{1/2} e^{-\beta \sigma_{\beta}(R_k)/2} - \epsilon, 
2 [\sigma_{\beta}(R_k)]^{1/2} e^{\beta \sigma_{\beta}(R_k)/2} + \epsilon)
\right) \,
=\, 1\, .
$$

This is true for each sub-rectangle $R_k$ comprising $\Gamma$, and $\Gamma$ is in $\Pi_{N,m}$.
But there are only finitely many sub-rectangles in $\Gamma$, and there are only finitely many
possible choices of a refined path of boxes $\Gamma \in \Pi_{N,m}$, for $N$ and $m$ fixed.
Combining this with (\ref{eq:lower}) implies that for any $\epsilon>0$ we have
\begin{equation}
\label{eq:FinalLowerBound}
\lim_{n \to \infty} 
{\bf P}\left\{ n^{-1/2} \ell(\xi_n) \geq \max_{\Gamma \in \Pi_{m,n}}
\sum_{k=1}^{2N-1} 2 [\sigma_{\beta}(R_k)]^{1/2} e^{-\beta \sigma_{\beta}(R_k)/2} - \epsilon 
\right\}\, =\, 1\, .
\end{equation}
By exactly similar arguments and (\ref{eq:upper}) we may also conclude that for each $\epsilon>0$
\begin{equation}
\label{eq:FinalUpperBound}
\lim_{n \to \infty} 
{\bf P}\left\{  n^{-1/2} \ell(\xi_n) \leq \max_{\Gamma \in \Pi_{m,n}}
\sum_{k=1}^{2N-1} 2 [\sigma_{\beta}(R^*_k)]^{1/2} e^{\beta \sigma_{\beta}(R^*_k)/2} + \epsilon 
\right\}\, =\, 1\, ,
\end{equation}
where we define 
$$
R_k^*\, =\, [L(\beta) a_{k-1},L(\beta) c_k],[L(\beta) b_{k-1},L(\beta) d_k]\, ,
$$
for each $k = 1,\dots,2N-1$.

We apply Lemma \ref{lem:boxes} to $u_{\beta}$.
For $N $ fixed, taking the limit $m\to \infty$, the  area of the symmetric differences of the boxes $R_k^*$ and $R_k$
converges to zero, uniformly in $\Gamma \in \Pi_{N,m}$ for each $k=1,\dots,2N-1$.
Since $\sigma_{\beta}$ has a density, the same is true replacing area by $\sigma_{\beta}$-measure.
Moreover, $\exp(-\beta \sigma_{\beta}(R_k))$ and $\exp(\beta \sigma_{\beta}(R_k^*))$ converge
to 1 uniformly as $N \to \infty$.
Therefore,
\begin{equation}
\label{eq:continuity}
\begin{split}
&\lim_{N \to \infty} \lim_{m \to \infty} \max_{\Gamma \in \Pi_{m,n}}
\sum_{k=1}^{2N-1} 2 [\sigma_{\beta}(R_k)]^{1/2} e^{-\beta \sigma_{\beta}(R_k)/2}\\
&\hspace{1cm}
=\, \lim_{N \to \infty} \lim_{m \to \infty} \max_{\Gamma \in \Pi_{m,n}}
\sum_{k=1}^{2N-1} 2 [\sigma_{\beta}(R_k^*)]^{1/2} e^{-\beta \sigma_{\beta}(R_k^*)/2}\\
&\hspace{2cm} = \max_{\Upsilon \in \mathcal{B}_{\nearrow}([0,L(\beta)]^2)} \tilde{\mathcal{J}}(u_{\beta},\Upsilon)\, .
\end{split}
\end{equation}
Combined with (\ref{eq:FinalLowerBound}) and (\ref{eq:FinalUpperBound}),
this implies that for each $\epsilon>0$,
$$
\lim_{n \to \infty} {\bf P} \left\{\left| n^{-1/2} \ell(\xi_n)
- \max_{\Upsilon \in \mathcal{B}_{\nearrow}([0,L(\beta)]^2} \tilde{\mathcal{J}}(u_{\beta},\Upsilon)\right| < \epsilon \right\}\, =\, 1\, .
$$
Finally, 
we use Lemma \ref{lem:variational} to conclude that 
$$
\max_{\Upsilon \in \mathcal{B}_{\nearrow}([0,L(\beta)]^2)} \tilde{\mathcal{J}}(u_{\beta},\Upsilon)\,
\leq\, \mathcal{L}(\beta)\, .
$$
But taking $\Upsilon = \{(t,t)\, :\, t \in [0,L(\beta)]\}$, which is the graph of the straight line curve $\gamma \in \mathcal{C}^1_{\nearrow}([0,L(\beta)]^2)$, gives 
$$
\tilde{\mathcal{J}}(u_{\beta},\Upsilon)\, 
=\, \mathcal{J}(u_{\beta},\gamma)\, =\, 2 \int_0^{L(\beta)} \frac{1}{1-\beta t^2}\, dt\, .
$$
This integral gives $\mathcal{L}(\beta)$.

Thus, the proof is completed, for the special choice of $(q_n)$
equal to $(\exp(-\beta/(n-1)))$.
Because the answer is continuous in $\beta$,  if we consider any sequence
$(q_n)$ satisfying $n (1-q_n) \to \beta$, then we get the same answer.
All that is left is to prove all the lemmas.

\section{Proofs of  Lemmas \ref{lem:USC}, \ref{lem:variational}, \ref{lem:TemperatureRenormalization} and \ref{lem:boxes} }

We now prove the lemmas, in an order which is not necessarily the same as the order they were stated.
This facilitates using arguments from one proof for the next one.

\begin{proofof}{\bf Proof of Lemma \ref{lem:USC}.}
Define
$$
\tilde{\mathcal{J}}_{\epsilon}(u,\Upsilon)\, 
=\, \inf \{ \tilde{\mathcal{J}}(u,\mathcal{P})\, :\, \mathcal{P} \in \Pi(\Upsilon)\, ,\
\|\mathcal{P}\| < \epsilon \}
$$
for each $\epsilon>0$.
We first show that this function is upper semi-continuous.

Let $\Pi_n$ denote $\Pi_n([a_1,a_2]\times [b_1,b_2])$.
We remind the reader that this is the set of all $(n+1)$-tuples $\mathcal{P} = ((x_0,y_0),\dots,(x_n,y_n)) \in (\R^2)^{n+1}$ such that
$a_1=x_0\leq \dots \leq x_n=a_n$ and $b_1=y_0\leq \dots \leq y_n=b_2$.
For each $\mathcal{P} \in \Pi_n$, we have
$$
\tilde{\mathcal{J}}(u,\mathcal{P})\,
=\, \sum_{k=0}^{n-1} 
\left(\int_{x_k}^{x_{k+1}} \int_{y_k}^{y_{k+1}} u(x,y)\, dx\, dy\right)^{1/2}\, .
$$
Since $u$ is continuous, the mapping $\tilde{\mathcal{J}}(u,\cdot) : \Pi_n \to \R$
is continuous when $\Pi_n$ has its usual topology as a subset of
$(\R^2)^{n+1}$.

Consider a fixed path $\Upsilon \in \mathcal{B}_{\nearrow}([a_1,a_2] \times [b_1,b_2])$
and a partition $\mathcal{P} \in \Pi(\Upsilon)$ such that $\|\mathcal{P}\| < \epsilon$.
Note that there is some $n$ such that $\mathcal{P} \in \Pi_n(\Upsilon)$.
Suppose that $(\Upsilon^{(k)})_{k=1}^{\infty}$ is a sequence in 
$\mathcal{B}_{\nearrow}([a_1,a_2] \times [b_1,b_2])$
converging to $\Upsilon$ in the Hausdorff metric.
Then for each point $(x,y) \in \Upsilon$, there is a sequence of points 
$(x^{(k)},y^{(k)}) \in \Upsilon^{(k)}$ converging to $(x,y)$.
Therefore, we may choose a sequence of partitions 
$\mathcal{P}^{(k)} \in \Pi_n(\Upsilon^{(k)})$
converging to $\mathcal{P}$ in $\Pi_n$.
By the continuity mentioned above,
$$
\lim_{k \to \infty} \tilde{\mathcal{J}}(u,\mathcal{P}^{(k)}) \to 
\tilde{\mathcal{J}}(u,\mathcal{P})\, .
$$
Also, $\|\mathcal{P}^{(k)}\|$ converges to $\|\mathcal{P}\|$ which is  less than $\epsilon$. So, for large enough $k$, we have $\|\mathcal{P}^{(k)}\| < \epsilon$, and hence
$$
\tilde{\mathcal{J}}(u,\mathcal{P}^{(k)})\, \geq \,
\tilde{\mathcal{J}}_{\epsilon}(u,\Upsilon^{(k)})\, ,
$$
since the right hand side is the infimum.
Therefore, we see that
$$
\limsup_{k \to \infty}\, \tilde{\mathcal{J}}_{\epsilon}(u,\Upsilon^{(k)})\,
\leq \, \tilde{\mathcal{J}}(u,\mathcal{P}) \, .
$$
Since this is true for all $\mathcal{P} \in \Pi(\Upsilon)$ with $\| \mathcal{P} \| < \epsilon$,
taking the infimum we obtain
$$
\limsup_{k \to \infty}\, \tilde{\mathcal{J}}_{\epsilon}(u,\Upsilon^{(k)})\,
\leq \, \tilde{\mathcal{J}}_{\epsilon}(u,\Upsilon)\, .
$$
Since this is true for every $\Upsilon \in \mathcal{B}_{\nearrow}([a_1,a_2] \times [b_1,b_2])$ and every sequence  
$(\Upsilon^{(k)})$
converging to $\Upsilon$ in the Hausdorff metric, this proves that
$\tilde{\mathcal{J}}_{\epsilon}(u,\cdot)$ is upper semi-continuous
on $\mathcal{B}_{\nearrow}([a_1,a_2] \times [b_1,b_2])$.
\end{proofof}

\begin{proofof}{\bf Proof of Lemma \ref{lem:boxes}:}
The proof of this lemma is also used in the proof of Lemma \ref{lem:variational}.
This is the reason it appears first.

Recall the definition of the basic boxes for $i,j \in \{1,\dots,N\}$,
$$
R_N(i,j)\, =\, \left[\frac{i-1}{N},\frac{i}{N}\right] \times \left[\frac{j-1}{N},\frac{j}{N}\right]\, .
$$
Given $N \in \N$, let us define $u_N^+$ and $u_N^-$ so that
\begin{gather*}
u_N^+(x,y)\, =\, \sum_{i,j=1}^{N} \max_{(x',y') \in R_N(i,j)}  u(x',y') \cdot
{\bf 1}_{R_N(i,j)}(x,y)\,  ,\\
u_N^-(x,y)\, =\, \sum_{i,j=1}^{N} \min_{(x',y') \in R_N(i,j)}  u(x',y') \cdot
{\bf 1}_{R_N(i,j)}(x,y)\, .
\end{gather*}
By monotonicity, $\mathcal{J}(u_N^-,\Upsilon) \leq \mathcal{J}(u,\Upsilon) \leq \mathcal{J}(u_N^+,\Upsilon)$
for every $\Upsilon \in \mathcal{B}_{\nearrow}([0,1]^2)$.
But since $u_N^-$ and $u_N^+$ are constant on squares, we know that the optimal $\Upsilon$'s for $u_N^-$ and $u_N^+$
are graphs of rectifiable curves $\gamma$ that are piecewise straight line curves on squares.
This follows from the discussion immediately following the statement of Lemma \ref{lem:variational},
where we verified the special case of that lemma for constant densities.
The only degrees of freedom for such curves are the slopes of each straight line, i.e., where they intersect the boundaries of each basic square.

For $(x_k,y_k), (x_{k+1},y_{k+1}) \in R_N(i,j)$ representing two points on the boundary, such that $x_{k-1}\leq x_k$ and $y_{k-1}\leq y_k$,
considering $\gamma_k$ to be the straight line
joining these points,
$$
\int_{\gamma_k} \sqrt{u_N^+(x(t),y(t)) x'(t) y'(t)}\, dt\, =\, \sqrt{(x_k-x_{k-1})(y_k-y_{k-1})} \max_{(x,y) \in R_N(i,j)} \sqrt{u(x,y)}\, ,
$$
with a similar formula for $u^-$.
This is a continuous function of the endpoints.
We may approximate the actual optimal piecewise straight line path by the "refined paths" of boxes in $\Pi_{N,m}$ if we take
the limit $m \to \infty$  with $N$ fixed.
Therefore, we find that
$$
\max_{\Upsilon \in \mathcal{B}_{\nearrow}([0,1]^2)} \tilde{\mathcal{J}}(u^\pm_N,\Upsilon)\,
=\, \lim_{m \to \infty}\, \max_{\Gamma \in \Pi_{m,n}} \sum_{k=1}^{2N-1} 
\left(\int_{x_{k-1}}^{x_k} \int_{y_{k-1}}^{y_k} u^\pm_N(x,y)\, dx\, dy\right)^{1/2}\, .
$$
Note that by upper semicontinuity, for each fixed $N$, the limit as $m \to \infty$ of the sequence
$$
\max_{\Gamma \in \Pi_{m,n}} \sum_{k=1}^{2N-1} 
\left(\int_{x_{k-1}}^{x_k} \int_{y_{k-1}}^{y_k} u(x,y)\, dx\, dy\right)^{1/2}
$$
also exists, and is the supremum over $m \in \N$.
Therefore, we conclude that for each fixed $N \in \N$,
\begin{align*}
&\hspace{-1cm}
\lim_{m \to \infty}\, \max_{\Gamma \in \Pi_{m,n}} \sum_{k=1}^{2N-1} 
\left(\int_{x_{k-1}}^{x_k} \int_{y_{k-1}}^{y_k} u(x,y)\, dx\, dy\right)^{1/2}\\
&\hspace{1cm} \in \left[\max_{\Upsilon \in \mathcal{B}_{\nearrow}([0,1]^2)} \tilde{\mathcal{J}}(u^-_N,\Upsilon)\, ,\
\max_{\Upsilon \in \mathcal{B}_{\nearrow}([0,1]^2)} \tilde{\mathcal{J}}(u^+_N,\Upsilon)\right]\, .
\end{align*}
But taking $N \to \infty$, we see that $u_N^+$ and $u_N^-$ converge to $u$, uniformly due to the continuity
of $u$.
Therefore, by the bound from equation (\ref{eq:Holder}), the lemma follows.
\end{proofof}

\begin{proofof}{\bf Proof of Lemma \ref{lem:variational}:}
Suppose that $x(t)$, $y(t)$ is a $\mathcal{C}^1$ parametrization of a curve 
$\gamma \in \mathcal{C}^1_{\nearrow}([0,L(\beta)]^2)$.
We may consider another time parametrization
$x_1(t) = x(f(t))$ and $y_1(t) = y(f(t))$ for a $\mathcal{C}^1$ function $f(t)$
such that
$$
x_1(t) y_1(t)\, =\, t^2\, .
$$
Indeed, we obtain $x(f(t)) y(f(t)) = t^2$. Setting $g(t) = x(t) y(t)$,
our assumptions on $x(t)$ and $y(t)$ guarantee that $g$ is continuous and 
$g'(t)$ is strictly positive and finite for all $t$.
We then take $f(t) = g^{-1}(t^2)$.

Since a change of time parametrization does not affect $\mathcal{J}(u_{\beta},\gamma)$,
we will simply assume that $x(t) y(t) = t^2$ is satisfied at the outset.
Then we obtain
$$
\mathcal{J}(u_{\beta},\gamma)\, 
=\, \int_0^{L(\beta)} \frac{2 \sqrt{x'(t) y'(t)}}{1-\beta t^2}\, dt\, ,
$$
due to the formula for $u_{\beta}$, and the fact that $x(t) y(t) = t^2 = L^2(\beta)$
at the endpoint of $\gamma$.
Now since we have $x(t) y(t) = t^2$, that implies that
\begin{equation}
\label{eq:constraint}
x(t) y'(t) + y(t) x'(t)\, =\, 2 t\, .
\end{equation}
We know that $x'(t)$ and $y'(t)$ are nonnegative. 
Therefore, we may use Cauchy's inequality with $\epsilon$
$$
\sqrt{x'(t) y'(t)}\, = \, [x'(t)]^{1/2} [y'(t)]^{1/2}\, \leq \,
\frac{\epsilon}{2} x'(t) + \frac{1}{2\epsilon} y'(t)\, ,
$$
for each $\epsilon \in (0,\infty)$.
Taking $\epsilon = y(t)/t$ we get $\epsilon^{-1} = t/y(t)$ which is $x(t)/t$ since
we chose the parametrization that $x(t) y(t) = t^2$.
Therefore, we obtain
$$
\sqrt{x'(t) y'(t)}\, \leq \, \frac{ y(t) x'(t) + x(t) y'(t)}{2t}\, .
$$
Taking into account our constraint (\ref{eq:constraint}), this gives 
$$
\sqrt{x'(t) y'(t)}\, \leq \, 1\, .
$$
Since this is true at all $t \in [0,L(\beta)]$ this proves the desired inequality.
But this upper bound gives the integral 
\begin{equation}
\label{eq:integralformula}
\int_0^{L(\beta)} \frac{dt}{1-\beta t^2}\, =\, 
\int_0^{[(1-e^{-\beta})/\beta]^{1/2}} \frac{dt}{1-\beta t^2}\, ,
\end{equation}
which equals the formula for $\mathcal{L}(\beta)$ from (\ref{eq:LDefinition}).

The argument works even if $\gamma$ is only piecewise $\mathcal{C}^1$, with finitely many pieces.
Moreover, by the proof of Lemma \ref{lem:boxes}, we know that the maximum over all $\Upsilon$
is arbitrarily well approximated by optimizing over piecewise linear paths.
So the inequality is true in general.
\end{proofof}

\begin{proofof}{\bf Proof of Lemma \ref{lem:TemperatureRenormalization}:}
This lemma is related to an important independence property of the Mallows measure.
Gnedin and Olshanski prove this in Proposition 3.2 of \cite{GnedinOlshanski2},
and they note that Mallows also stated a version in \cite{Mallows}.
Our lemma is slightly different so we prove it here for completeness.

Using Definition \ref{def:nu}, we can instead consider $(X_{1},Y_{1}),\dots,(X_{n},Y_{n})$
distributed according to $\mu_{n,\lambda\times \kappa,\beta}$ in place of $\xi$ distributed
according to $\nu_{n,\lambda\times \kappa,\beta}$.
Given $m\leq n$, we note that, conditioning on the positions of $(X_{m+1},Y_{m+1}),\dots,(X_n,Y_n)$,
the conditional distribution of $(X_1,Y_1),\dots,(X_m,Y_m)$ is the same as $\mu_{m,\alpha,\beta'}$,
where $\beta' = (m-1)\beta/(n-1)$ and where $\alpha$ is the random measure
$$
d\alpha(x,y)\, =\, \frac{1}{Z_1}\, \exp\left(-\frac{\beta}{n-1} \sum_{i=m+1}^n h(x-X_i,y-Y_i)\right)\,
d\lambda(x)\, d\kappa(y)\, ,
$$
where $Z_1$ is a random normalization constant.
By finite exchangeability of $\mu_{n,\lambda\times \kappa,\beta}$ it does not matter which $m$
points we assume are in the square $[a_1,a_2]\times [b_1,b_2]$ which is why we just chose
the first $m$.

If we could rewrite $\alpha$ as a product of two measures $\lambda',\kappa'$ without atoms
then we could appeal to (\ref{eq:equiv}).
By inspection $\alpha$ is not a product of two measures.
However, if we condition on the event that there are exactly $m$ points in the square $[a_1,a_2]\times [b_1,b_2]$
then we can accomplish this goal.
Let use define the event $A = \{(X_{m+1},Y_{m+1}),\dots,(X_n,Y_n) \not\in [a_1,a_2]\times [b_1,b_2]\}$.
Then, given the event $A$, we can write
\begin{equation}
\label{eq:cond}
{\bf 1}_{[a_1,a_2]\times [b_1,b_2]}(x,y)\, d\alpha(x,y)\, =\,  d\lambda'(x)\, d\kappa'(y)\, ,
\end{equation}
where $\lambda'$ and $\kappa'$ are random measures
$$
d\lambda'(x)\, =\, \frac{1}{Z_2}\, e^{-\beta h_1(x)/(n-1)}\, d\lambda(x)\, ,\quad
d\kappa'(y)\, =\, \frac{1}{Z_3}\, e^{-\beta h_2(y)/(n-1)}\, d\kappa(y)\, ,
$$
with $Z_2$ and $Z_3$ normalization constants and random functions
$$
h_1(x)\, =\, \sum_{i=m+1}^{n} [{\bf 1}_{\{Y_i < b_1\}} {\bf 1}_{(X_i,\infty)}(x) + {\bf 1}_{\{Y_i>b_2\}} {\bf 1}_{(-\infty,X_i)}(x)]\, ,
$$
and
$$
h_2(y)\, =\, \sum_{i=m+1}^{n} [{\bf 1}_{\{X_i < a_1\}} {\bf 1}_{(Y_i,\infty)}(y) + {\bf 1}_{\{X_i>a_2\}} {\bf 1}_{(-\infty,Y_i)}(x)]\, .
$$
This may appear not to reproduce $\alpha$ exactly because it may seem that $h_1$ and $h_2$ double-count some terms
which are only counted once in  $\sum_{i=m+1}^{n} h(x-X_i,y-Y_i)$.
But this is compensated by the normalization constants $Z_1$ and $Z_2$ as we now explain.

Note that for each $i \in \{m+1,\dots,n\}$ since $(X_i,Y_i) \not\in [a_1,a_2] \times [b_1,b_2]$ we either have
$Y_i<b_1$, $Y_i>b_2$, $X_i<a_1$ or $X_i>a_2$.
These are not exclusive.
But for instance, if $Y_i<b_1$ and $X_i<a_1$ then for every $(x,y) \in [a_1,a_2] \times [b_1,b_2]$, we have
${\bf 1}_{\{Y_i < b_1\}} {\bf 1}_{(X_i,\infty)}(x)=1$ and ${\bf 1}_{\{X_i < a_1\}} {\bf 1}_{(Y_i,\infty)}(y)=1$.
Therefore, these terms are constant in the functions $h_1(x)$ and $h_2(y)$: they do not depend on the actual
position of $(x,y)$ as long as $(x,y) \in [a_1,a_2] \times [b_1,b_2]$.
Therefore, using the normalization constants $Z_1$ and $Z_2$, this double-counting may be compensated.

Since we are conditioning on $\{(X_1,Y_1),\dots,(X_m,Y_m) \in [a_1,a_2] \times [b_1,b_2]\}$ and the event $A$,
the conditional identity (\ref{eq:cond}) suffices to prove the claim.
\end{proofof}

\section{ Proof of Lemma \ref{lem:CouplingAboveBelow}}

This is the most involved lemma to prove.
It follows from a coupling argument.
In fact we use the most basic type of coupling for discrete random variables,
based on the total variation distance.
See the monograph \cite{LevinPeresWilmer} (Chapter 4) for a nice and elementary exposition.
But we also combine this with the fact that we have a measure which may
be derived from a statistical mechanical model of {\em mean field} type.
Because the model is of mean field type, the correlations are weak and spread out.
In principle, this allows one to approximate by a mixture of i.i.d., points as one sees in de Finetti's
theorem in probability, or the Kac-Lebowitz-Penrose limit in statistical physics.
(See \cite{Aldous} for a reference on the former, and the appendix of \cite{Thompson} 
for the latter.)

Given a probability measure $\alpha$ on $\R^2$, let $\theta_{1,\alpha}$ be the distribution on $\mathcal{X}$
associated to the random point process
$$
\xi_1(A,\omega)\, =\, {\bf 1}_A(X(\omega),Y(\omega))\, ,
$$
assuming $(X(\omega),Y(\omega))$ is $\alpha$-distributed.

\begin{lemma}
\label{lem:Coupling}
Suppose that $\alpha$ and $\tilde{\alpha}$ are two measures on $\R^2$ such that $\tilde{\alpha} \ll \alpha$, and suppose that
for some $p \in (0,1]$ there are uniform bounds
$$
p\, \leq\, \frac{d\tilde{\alpha}}{d\alpha}\, \leq\, p^{-1}\, .
$$
Then the following holds.
\begin{itemize}
\item[(a)] There exists a pair of random point processes $\eta_1, \xi_1$, defined on the same probability space,
such that $\eta_1 \leq \xi_1$, a.s., and satisfying these properties:
$\xi_1$ has distribution $\theta_{1,\tilde{\alpha}}$;
there is an $\alpha$-distributed random point $(U_1,V_1)$, and independently there is a 
Bernoulli-$p$ random variable $K_1$, such that $\eta_1(A) = K_1 {\bf 1}_A(U_1,V_1)$.
\item[(b)] There exists a pair of random point processes $\xi_1,\zeta_1$, defined on the same probability space,
such that $\xi_1 \leq \zeta_1$, a.s., and satisfying these properties:
$\xi_1$ has distribution $\theta_{1,\tilde{\alpha}}$;
there is a sequence of i.i.d., $\alpha$-distributed points $(U_1,V_1),(U_2,V_2),\dots$ and a
geometric-$p$ random variable $N_1$, such that
$\zeta_1(A) = \sum_{i=1}^{N_1} {\bf 1}_A(U_i,V_i)$.
\end{itemize}
\end{lemma}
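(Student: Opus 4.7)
For part (a), the plan is to exploit the lower bound $d\tilde\alpha/d\alpha \geq p$ to write $\tilde\alpha$ as a mixture involving $\alpha$. Specifically, I would observe that the measure $\tilde\alpha - p\alpha$ is nonnegative with total mass $1-p$, so $\mu_0 := (\tilde\alpha - p\alpha)/(1-p)$ is a Borel probability measure on $\R^2$ and $\tilde\alpha = p\alpha + (1-p)\mu_0$. On a probability space carrying three mutually independent ingredients---a Bernoulli-$p$ random variable $K_1$, an $\alpha$-distributed point $(U_1,V_1)$, and a $\mu_0$-distributed point $(U_1',V_1')$---I would define $(X_1,Y_1) := (U_1,V_1)$ on $\{K_1=1\}$ and $(X_1,Y_1) := (U_1',V_1')$ on $\{K_1=0\}$, then set $\xi_1(A) := \mathbf{1}_A(X_1,Y_1)$ and $\eta_1(A) := K_1\mathbf{1}_A(U_1,V_1)$. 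The mixture identity makes $\xi_1$ distributed as $\theta_{1,\tilde\alpha}$, the dominance $\eta_1 \leq \xi_1$ is trivial on $\{K_1=0\}$ and is an equality on $\{K_1=1\}$, and the required independence of $K_1$ from $(U_1,V_1)$ is built into the construction.

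For part (b), the plan is to use the upper bound $d\tilde\alpha/d\alpha \leq p^{-1}$ to run a standard acceptance-rejection algorithm. Since $p\cdot(d\tilde\alpha/d\alpha)(\cdot)\leq 1$ everywhere, I would take i.i.d.\ $\alpha$-distributed points $(U_1,V_1),(U_2,V_2),\dots$ together with conditionally independent coin flips $B_i$ satisfying $\mathbf{P}(B_i=1\mid(U_i,V_i))=p\cdot(d\tilde\alpha/d\alpha)(U_i,V_i)$; integrating gives the unconditional probability $\mathbf{P}(B_i=1)=p$. Setting $N_1 := \min\{i:B_i=1\}$, $\zeta_1(A) := \sum_{i=1}^{N_1}\mathbf{1}_A(U_i,V_i)$, and $\xi_1(A) := \mathbf{1}_A(U_{N_1},V_{N_1})$, the dominance $\xi_1 \leq \zeta_1$ is automatic because $(U_{N_1},V_{N_1})$ is one of the points counted in the sum. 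The joint-law identity $\mathbf{P}(N_1=k,\,(U_k,V_k)\in A) = (1-p)^{k-1}\int_A p(d\tilde\alpha/d\alpha)\,d\alpha = (1-p)^{k-1}p\,\tilde\alpha(A)$ factors as a product of marginals, simultaneously verifying that $N_1$ is geometric-$p$ and that $(U_{N_1},V_{N_1}) \sim \tilde\alpha$, so $\xi_1$ has the required law $\theta_{1,\tilde\alpha}$. Note that the statement only requires the correct marginals, not independence of $N_1$ from the sequence $(U_i,V_i)$, so no further disentangling is necessary.

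There is no genuine obstacle here, only a subtlety: the two-sided density bound is precisely what makes both constructions work, with the lower bound providing the mixture decomposition in (a) and the upper bound keeping the acceptance probability in $[0,1]$ in (b). The only step that really needs care is the joint-law factorization in part (b), and even there the geometric distribution of $N_1$ falls out directly from the $\alpha$-integration of $p\cdot d\tilde\alpha/d\alpha$. Presenting the two parts as separate constructions (rather than squeezing them onto a common enlarged space) keeps the argument clean and makes the role of each inequality transparent.
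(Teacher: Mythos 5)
Your proof is correct. Part (a) is exactly the paper's argument: the same mixture decomposition $\tilde\alpha = p\,\alpha + (1-p)\hat\alpha$ with $d\hat\alpha/d\alpha = (1-p)^{-1}(f-p)$, an independent Bernoulli-$p$ flag, and the same definitions of $\eta_1,\xi_1$ (both you and the paper tacitly assume $p<1$ here; if $p=1$ the bounds force $\tilde\alpha=\alpha$ and the statement is trivial). Part (b) reaches the same coupling by a different route. The paper writes the \emph{dual} mixture $\alpha = p\,\tilde\alpha + (1-p)\check\alpha$ with $d\check\alpha/d\alpha = (p^{-1}-1)^{-1}(p^{-1}-f)$, builds the i.i.d.\ $\alpha$-points $(U_i,V_i)$ from independent $\tilde\alpha$- and $\check\alpha$-distributed points selected by independent Bernoulli-$p$ flags $K_i$, and sets $N_1=\min\{i:K_i=1\}$; there the accepted point is $\tilde\alpha$-distributed by construction, and the work is checking the mixture identity makes the $(U_i,V_i)$ i.i.d.\ $\alpha$-distributed. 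You instead run acceptance--rejection directly on an i.i.d.\ $\alpha$-sequence with conditional acceptance probability $p\,f(U_i,V_i)$ (valid precisely because $f\le p^{-1}$), and the work shifts to the joint-law factorization $\mathbf{P}(N_1=k,\,(U_k,V_k)\in A)=(1-p)^{k-1}p\,\tilde\alpha(A)$, which you carry out correctly and which simultaneously gives the geometric law of $N_1$ and the law $\tilde\alpha$ of the accepted point. In fact the two constructions yield the same joint law of points and flags, since in the paper's construction one also has $\mathbf{P}(K_i=1\mid (U_i,V_i))=p\,f(U_i,V_i)$; they are two descriptions of one coupling, and your reading of the statement is right that no independence of $N_1$ from the sequence is required. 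One point worth noting for how the lemma is used afterwards: the paper's follow-up remark (that conditionally on $\{N_1\ge i\}$ the point $(U_i,V_i)$ is still $\alpha$-distributed, which is what lets the coupling be iterated for $n>1$) also holds in your construction, because $\{N_1\ge i\}$ is measurable with respect to the $\sigma$-algebra generated by $((U_j,V_j),B_j)$ for $j<i$, which is independent of $(U_i,V_i)$.
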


\begin{proof}
Let $f = d\tilde{\alpha}/d\alpha$.
We follow the standard approach, for example in Section 4.2 of \cite{LevinPeresWilmer}.
We describe it here in detail, in order to be self-contained.
Define $g(x) = (1-p)^{-1} [f(x)-p]$, which is nonnegative by assumption,
and let $\hat{\alpha}$ be the probability measure such that
$d\hat{\alpha}/d\alpha = g$.
Note that $\tilde{\alpha}$ can be written as a mixture: $\tilde{\alpha} = p \alpha + (1-p) \hat{\alpha}$.

Independently of one another, let $(U_1,V_1) \in \R^2$ be $\alpha$-distributed, and let $(W_1,Z_1) \in \R^2$
be $\hat{\alpha}$-distributed.
Independently of all that, also let $K_1$ be Bernoulli-$q$.
Then, taking
$$
(X_1,Y_1)\, =\, \begin{cases} (U_1,V_1) & \text { if $K_1 = 1$,}\\
(W_1,Z_1) & \text { otherwise,}
\end{cases}
$$
we see that $(X_1,Y_1)$ is $\tilde{\alpha}$-distributed.
We let $\eta_1(A,\omega) = K_1(\omega) {\bf 1}_A(U_1(\omega),V_1(\omega))$.
If $K_1(\omega)=1$ then $(U_1(\omega),V_1(\omega)) = (X_1(\omega),Y_1(\omega))$.
Therefore taking $\xi_1(A,\Omega) = {\bf 1}_A(X_1(\omega),Y_1(\omega))$, we see
that $\eta_1(\cdot,\omega) \leq \xi_1(\cdot,\omega)$, a.s.
This proves (a). 

The proof for (b) is analogous. Let $h(x) = (p^{-1} - 1)^{-1}[p^{-1} - f(x)]$, which is nonnegative by hypothesis.
Let $\check{\alpha}$ be the probability measure such that $d\check{\alpha}/d\alpha = h$.
Then $\alpha$ can be written as the mixture: $\alpha = p \tilde{\alpha} + (1-p) \check{\alpha}$.
Independently of each other, let $(X_1,Y_1),(X_2,Y_2),\dots$ be i.i.d., $\tilde{\alpha}$ distributed random variables,
and let $(Z_1,W_1), (Z_2,W_2),\dots$ be i.i.d., $\check{\alpha}$ distributed random variables.
Also, independently of all that, let $K_1,K_2,\dots$ be i.i.d., Bernoulli-$q$ random variables.
For each $i$, we define
$$
(U_i,V_i)\, =\, \begin{cases} (X_i,Y_i) & \text { if $K_i=1$,}\\
(Z_i,W_i) & \text { otherwise.}
\end{cases}
$$
Then $(U_1,V_1),(U_2,V_2),\dots$ are i.i.d., $\alpha$-distributed random variables.
Let $N_1 = \min\{n\, :\, K_n = 1\}$.
We see that $(X_{N_1},Y_{N_1}) = (U_{N_1},V_{N_1})$.
So clearly ${\bf 1}_A(X_{N_1},Y_{N_1}) \leq \sum_{k=1}^{N_1} {\bf 1}_A(U_k,V_k)$.
\end{proof}

Note that $K_1$ and $N_1$ are random variables which are dependent on  $(U_1,V_1),(U_2,V_2),\dots$.
But, for instance, conditioning on the event $\{N_1\geq i\}$, we do see that $(U_i,V_i)$ is $\alpha$-distributed.
This is for the usual reason, as in Doob's optional stopping theorem:
the event $\{N_1 \geq i\}$ is measurable with respect to the $\sigma$-algebra
generated by $K_1,\dots,K_{i-1}$, while the point $(U_i,V_i)$ is independent of that
$\sigma$-algebra.
This will be useful when we consider $n>1$, which is next.

\subsection{Resampling and Coupling for $n>1$}

In order to complete the proof of Lemma \ref{lem:CouplingAboveBelow} we want to use Lemma \ref{lem:Coupling}.
More precisely we wish to iterate the bound for $n>1$.
Suppose that $\tilde{\alpha}_n$ is a probability measure on $(\R^2)^n$, and $\alpha$ is a probability measure on $\R^2$.
Let $\theta_{n,\tilde{\alpha}_n}$ be the distribution on $\mathcal{X}$
associated to the random point process
$$
\xi_n(A,\omega)\, =\, \sum_{k=1}^{n} {\bf 1}_A(X_k(\omega),Y_k(\omega))\, ,
$$
assuming $(X_1(\omega),Y_1(\omega)),\dots,(X_n(\omega),Y_n(\omega))$ are $\tilde{\alpha}_n$-distributed.

If $\tilde{\alpha}_n$ was a product measure then it would be trivial to generalize Lemma \ref{lem:Coupling}
to compare it to the product measure $\alpha^n$.
But there is another condition which makes it equally easy to generalize.
Let $\mathcal{F}$ denote the Borel $\sigma$-algebra on $\R^2$.
Let $\mathcal{F}^n$ denote the Borel $\sigma$-algebra on $(\R^2)^n$.
Let $\mathcal{F}^n_k$ denote the sub-$\sigma$-algebra of $\mathcal{F}^n$
generated by the maps $((x_1,y_1),\dots,(x_n,y_n)) \mapsto (x_j,y_j)$
for $j \in \{1,\dots,n\} \setminus \{k\}$.
We suppose that there are regular conditional probability measures for each of these sub-$\sigma$-algebras.
Let us 
make this precise:

\begin{definition}
\label{def:rcpd}
We say that $\tilde{\alpha}_{n,k} : \mathcal{F} \times (\R^2)^n \to \R$ is a regular conditional probability measure
for $\tilde{\alpha}_n$, relative to the sub-$\sigma$-algebra $\mathcal{F}^n_k$ if the following three conditions are met:
\begin{enumerate}
\item For each $((x_1,y_1),\dots,(x_n,y_n)) \in (\R^2)^n$
the mapping
$$
A\mapsto \tilde{\alpha}_{n,k}\big(A;(x_1,y_1),\dots,(x_n,y_n)\big)
$$
defines a probability measure on $\mathcal{F}$.
\item For each $A \in \mathcal{F}$, the mapping
$$
((x^1,y^2),\dots,(x^n,y^n)) \mapsto  \tilde{\alpha}_{n,k}\big(A;(x_1,y_1),\dots,(x_n,y_n)\big)
$$
is $\mathcal{F}^n$ measurable.
\item The measure $\tilde{\alpha}_{n,k}$ is a {\em version} of the conditional expectation ${\bf E}^{\tilde{\alpha}_n}[\cdot\, |\, \mathcal{F}^n_k]$.
In this case this means precisely that for each $A_1,\dots,A_n \in \mathcal{F}$,
$$
{\bf E}^{\tilde{\alpha}_n}\Bigg[\tilde{\alpha}_{n,k}\Big(A_k;(X_1,Y_1),\dots,(X_n,Y_n)\Big) \prod_{\substack{j=1\\ j\neq k}}^n {\bf 1}_{A_j}(X_j,Y_j)\Bigg]\,
=\, \tilde{\alpha}_n(A_1\times \cdots \times A_n)\, .
$$
\end{enumerate}
\end{definition}

For $p \in (0,1]$,
we will say that $\tilde{\alpha}_n$ satisfies the $p$-resampling condition relative to $\alpha$ if the following conditions are satisfied:
\begin{itemize}
\item There exist regular conditional probability distributions $\tilde{\alpha}_{n,k}$  relative to $\mathcal{F}^n_k$
for $k=1,\dots,n$.
\item For each $((x_1,y_1),\dots,(x_n,y_n)) \in \R^n$, and for each $k=1,\dots,n$,
$$
\tilde{\alpha}_{n,k}(\cdot;(x_1,y_1),\dots,(x_n,y_n))\, \ll\, \alpha\, .
$$
\item The following uniform bounds are satisfied for each $((x_1,y_1),\dots,(x_n,y_n)) \in \R^n$, and for each $k=1,\dots,n$:
$$
p\, \leq\, \frac{d\tilde{\alpha}_{n,k}(\cdot;(x_1,y_1),\dots,(x_n,y_n))}{d\alpha}\, \leq p^{-1}\, .
$$
\end{itemize}

\begin{lemma}
\label{lem:CouplingGeneral}
Suppose that for some $p \in (0,1]$, the measure $\tilde{\alpha}_n$
satisfies the $p$-resampling condition relative to $\alpha$.
Then the following holds.
\begin{itemize}
\item[(a)] There exists a pair of random point processes $\eta_n, \xi_n$, defined on the same probability space,
such that $\eta_n \leq \xi_n$, a.s., and satisfying these properties:
$\xi_n$ has distribution $\theta_{n,\tilde{\alpha}_n}$;
there are i.i.d., $\alpha$-distributed points $\{(U^k_1,V^k_1)\}_{k=1}^n$, and independently there are i.i.d.,
Bernoulli-$p$ random variables $K_1,\dots,K_n$, such that $\eta_n(A) = \sum_{k=1}^{n} K_k {\bf 1}_A(U^k_1,V^k_1)$.
\item[(b)] There exists a pair of random point processes $\xi_n,\zeta_n$, defined on the same probability space,
such that $\xi_n \leq \zeta_n$, a.s., and satisfying these properties:
$\xi_n$ has distribution $\theta_{n,\tilde{\alpha}_n}$;
there are i.i.d., $\alpha$-distributed points $\{(U^k_i,V^k_i)\, :\, k=1,\dots,n\, ,\ i=1,2,\dots\}$,
and i.i.d., geometric-$p$ random variables $N_1,\dots,N_n$, such that
$\zeta_n(A) = \sum_{k=1}^{n} \sum_{i=1}^{N_k} {\bf 1}_A(U^k_i,V^k_i)$.
\end{itemize}
\end{lemma}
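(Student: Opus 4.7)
The plan is to deduce Lemma \ref{lem:CouplingGeneral} from Lemma \ref{lem:Coupling} by iterating a single-site resampling (Gibbs sampler style) across the $n$ coordinates. The $p$-resampling condition is tailor-made for this: it says that for every $k$ and every conditioning configuration, the regular conditional measure $\tilde{\alpha}_{n,k}$ has Radon--Nikodym derivative in $[p,p^{-1}]$ against $\alpha$, which is exactly the hypothesis of Lemma \ref{lem:Coupling} applied to a single site.

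For part (a), I would start with a vector $((X_1^{(0)},Y_1^{(0)}),\dots,(X_n^{(0)},Y_n^{(0)})) \sim \tilde{\alpha}_n$ and process the coordinates in order $k=1,\dots,n$. At step $k$, using the mixture decomposition $\tilde{\alpha}_{n,k}=p\alpha+(1-p)\hat{\alpha}_k$ from the proof of Lemma \ref{lem:Coupling}(a), I introduce, conditionally on the history, three independent draws: a Bernoulli-$p$ variable $K_k$, an $\alpha$-distributed point $(U_1^k,V_1^k)$, and an $\hat{\alpha}_k$-distributed point; then I define $(X_k^{(k)},Y_k^{(k)})$ to be $(U_1^k,V_1^k)$ when $K_k=1$ and the $\hat{\alpha}_k$-draw otherwise, leaving the remaining coordinates unchanged. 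By the tower property, invoked via condition 3 of Definition \ref{def:rcpd}, this Gibbs update preserves the $\tilde{\alpha}_n$ law, so after all $n$ steps the empirical measure of the final configuration is $\theta_{n,\tilde{\alpha}_n}$-distributed and gives $\xi_n$. Since coordinate $k$ is never touched after step $k$, on $\{K_k=1\}$ we have $(U_1^k,V_1^k) = (X_k^{(n)},Y_k^{(n)})$, which yields $\eta_n\leq \xi_n$ termwise.

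For part (b), I would use the same Gibbs scaffold but swap in the construction from the proof of Lemma \ref{lem:Coupling}(b), which rests on the other mixture $\alpha = p\tilde{\alpha}_{n,k} + (1-p)\check{\alpha}_k$. At step $k$ I generate, conditionally on the history, i.i.d.\ Bernoulli-$p$ variables $K_i^k$, i.i.d.\ $\tilde{\alpha}_{n,k}$-draws, and i.i.d.\ $\check{\alpha}_k$-draws, for $i=1,2,\dots$; setting $(U_i^k,V_i^k)$ equal to the $\tilde{\alpha}_{n,k}$-draw when $K_i^k=1$ and to the $\check{\alpha}_k$-draw otherwise produces i.i.d.\ $\alpha$-distributed points, while $N_k:=\min\{i:K_i^k=1\}$ is geometric-$p$. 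The new $k$-th coordinate is $(X_k^{(k)},Y_k^{(k)}):=(U_{N_k}^k,V_{N_k}^k)$, which is $\tilde{\alpha}_{n,k}$-distributed by the optional-stopping remark immediately following Lemma \ref{lem:Coupling}; Gibbs invariance then again preserves $\tilde{\alpha}_n$. Because the single point $(U_{N_k}^k,V_{N_k}^k)$ appears among the $N_k$ points $(U_1^k,V_1^k),\dots,(U_{N_k}^k,V_{N_k}^k)$ contributing to $\zeta_n$, we obtain $\xi_n\leq\zeta_n$ pointwise.

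The principal technical obstacle, in both parts, will be bookkeeping the joint distribution of the auxiliary randomness across the $n$ Gibbs steps: verifying that $K_1,\dots,K_n$ (respectively $N_1,\dots,N_n$) are genuinely i.i.d.\ and that, as a collection, they are independent of the i.i.d.\ $\alpha$-distributed family $\{(U_1^k,V_1^k)\}$ (respectively $\{(U_i^k,V_i^k)\}_{k,i}$), while simultaneously keeping track of the $\tilde{\alpha}_n$-invariance of the Gibbs update. The argument is inductive: at each step the marginal laws of the fresh Bernoulli(s) and the fresh $\alpha$-draw(s) do not depend on the past and are conditionally independent of each other given the past, so conditional independence upgrades to unconditional independence coordinate by coordinate; combining this with the invariance of the target law at each step gives the claimed joint distribution.
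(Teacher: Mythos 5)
Your proposal is correct and follows essentially the same route as the paper: a single Gibbs-sampler sweep that resamples coordinate $k$ at step $k$ from the regular conditional law $\tilde{\alpha}_{n,k}$ (preserving $\tilde{\alpha}_n$ by condition 3 of Definition \ref{def:rcpd}), with the single-site coupling of Lemma \ref{lem:Coupling} applied at each step and the observation that the fresh auxiliary variables have marginal laws depending only on $p$ and $\alpha$, hence are independent across steps. The only cosmetic difference is that you unpack the mixture decompositions from the proof of Lemma \ref{lem:Coupling} rather than invoking that lemma as a black box.
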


\begin{proof}
We start with an $\tilde{\alpha}_n$-distributed  random point
$((X^k_1,Y^k_1),\dots,(X^k_n,Y^k_n))$.
Then iteratively, for each $k=1,\dots,n$, we update this point as follows.
Conditional on 
$$
((X^{k-1}_1,Y^{k-1}_1),\dots,(X^{k-1}_n,Y^{k-1}_n))\, ,
$$ 
we choose $(X^k_k,Y^k_k)$
randomly, according to the distribution
$$
\tilde{\alpha}_{n,k}\big(\cdot;(X^{k-1}_1,Y^{k-1}_1),\dots,(X^{k-1}_n,Y^{k-1}_n)\big)\, .
$$
We let $(X^k_j,Y^k_j) = (X^{k-1}_j,Y^{k-1}_j)$ for each $j \in \{1,\dots,n\} \setminus \{k\}$.
With this resampling rule, we can see that $((X^k_1,Y^k_1),\dots,(X^k_n,Y^k_n))$
is $\tilde{\alpha}_n$-distributed for each $k$.
Also $(X^n_k,Y^n_k) = (X^k_k,Y^k_k)$.

We apply Lemma \ref{lem:Coupling} to each of the points $(X^k,Y^k)$, in turn.
Since they all have distributions satisfying the hypotheses of the lemma, this may be done.
Note that by our choices, the various $(U^k_i,V^k_i)$'s and $K_k$'s and $N_k$'s have distributions
which are prescribed just in terms of $p$ and $\alpha$.
Their distributions do not depend on the regular conditional probability distributions,
as long as the hypotheses of the present lemma are satisfied.
Therefore, they are independent of one another.
\end{proof}

Given the lemma, for part (a) we let $(U_1,V_1),\dots,(U_{K_1+\dots+K_n},V_{K_1+\dots+K_n})$ be equal to the points $(U^k_1,V^k_1)$ such that $K_k=1$,
suitably relabeled, but keeping the relative order.
By the idea, related to Doob's stopping theorem, that we mentioned before, one can see that 
$$
(U_1,V_1),\dots,(U_{K_1+\dots+K_n},V_{K_1+\dots+K_n})
$$
are i.i.d.,
$\alpha$-distributed.
We do similarly in case (b).
This allows us to match up our notation with Lemma \ref{lem:CouplingAboveBelow}.
The only thing left is to check that ``$p$-resampling condition'' for the regular conditional probability distributions
is satisfied for Boltzmann-Gibbs distributions.

\subsection{Regular conditional probability distributions for the Boltzmann-Gibbs measure}

In Lemma \ref{lem:CouplingAboveBelow}, we assume that $((X_1,Y_1),\dots,(X_n,Y_n))$ are distributed according
to the Boltzmann-Gibbs measure $\mu_{n,\lambda \times \kappa,\beta}$.
Then we let $\nu_{n,\lambda\times \kappa,\beta}$ be the distribution of the random point process
$\xi_n$, such that
$$
\xi_n(A)\, =\, \sum_{k=1}^{n} {\bf 1}_A(X_k,Y_k)\, .
$$
In other words, the distribution $\mu_{n,\lambda\times\kappa,\beta}$ corresponds to the distribution
we have denoted $\theta_{n,\tilde{\alpha}_n}$ if we let $\tilde{\alpha}_n = \mu_{n,\lambda \times \kappa,\beta}$.
We take $\alpha = \lambda \times \kappa$.
Now we want to verify the hypotheses of Lemma \ref{lem:CouplingGeneral} for $p=e^{-|\beta|}$.

Referring back to Section \ref{sec:BoltzmannGibbs}, we see that $\tilde{\alpha}_n$ is absolutely continuous
with respect to the product measure $\alpha^n$.
Moreover,
$$
\frac{d\tilde{\alpha}_n}{d\alpha^n}((x_1,y_1),\dots,(x_n,y_n))\, =\, \frac{1}{Z_n(\alpha,\beta)}\, \exp\Big(-\beta H_n((x_1,y_1),\dots,(x_n,y_n))\Big)\, .
$$
Here the Hamiltonian is 
$$
H_n((x_1,y_1),\dots,(x_n,y_n))\, =\, \frac{1}{n-1} \sum_{i=1}^{n-1} \sum_{j=i+1}^{n} h(x_i-x_j,y_i-y_j)\, .
$$
This leads us to define a conditional Hamiltonian for the single point $(x,y)$ substituted in for $(x_k,y_k)$ in the configuration $((x_1,y_1),\dots,(x_n,y_n))$:
$$
H_{n,k}\big((x,y);(x_1,y_1),\dots,(x_n,y_n)\big)\, 
=\, \frac{1}{n-1} \sum_{\substack{j=1\\j\neq k}}^n h_n(x-x_j,y-y_j)\, .
$$
With this, we define a measure $\tilde{\alpha}_{n,k}\big(\cdot;(x_1,y_1),\dots,(x_n,y_n)\big)$, which is absolutely continuous
with respect to $\alpha$, and such that
$$
\frac{d\tilde{\alpha}_{n,k}\big(\cdot;(x_1,y_1),\dots,(x_n,y_n)\big)}{d\alpha}(x,y)\,
=\, \frac{1}
{Z_{n,k}\big(\alpha,\beta;(x_1,y_1),\dots,(x_n,y_n)\big)}e^{-\beta H_{n,k}((x,y);(x_1,y_1),\dots,(x_n,y_n))}\, .
$$
The normalization is 
$$
Z_{n,k}\big(\alpha,\beta;(x_1,y_1),\dots,(x_n,y_n)\big)\, =\, \int_{\R^2} e^{-\beta H_{n,k}((x,y);(x_1,y_1),\dots,(x_n,y_n))}\, d\alpha(x,y)\, .
$$
To see that this is the desired regular conditional probability distribution,
note that in the product
$$
\frac{d\tilde{\alpha}_{n,k}\big(\cdot;(x_1,y_1),\dots,(x_n,y_n)\big)}{d\alpha}(x,y)\, \frac{d\tilde{\alpha}_n}{d\alpha^n}((x_1,y_1),\dots,(x_n,y_n))
$$
we have the product of two factors:
$$
\frac{1}
{Z_{n,k}\big(\alpha,\beta;(x_1,y_1),\dots,(x_n,y_n)\big)}e^{-\beta H_{n,k}((x,y);(x_1,y_1),\dots,(x_n,y_n))}
$$
and
$$
\frac{1}{Z_n(\alpha,\beta)}\, \exp\Big(-\beta H_n((x_1,y_1),\dots,(x_n,y_n))\Big)\, .
$$
The first factor does not depend on $(x_k,y_k)$.
The second factor does depend on it, but integrating against $d\alpha(x_k,y_k)$ gives,
$$
\int_{\R^2} \frac{ e^{-\beta H_n((x_1,y_1),\dots,(x_n,y_n))}}{Z_n(\alpha,\beta)}\, d\alpha(x_k,y_k)\,
=\, \frac{Z_{n,k}\big(\alpha,\beta;(x_1,y_1),\dots,(x_n,y_n)}{Z_n(\alpha,\beta)} e^{-\beta H_{n,k}'((x_1,y_1),\dots,(x_n,y_n))} 
$$
where
$$
H_{n,k}'((x_1,y_1),\dots,(x_n,y_n))\, =\, \frac{1}{n-1} \sum_{\substack{i=1\\i\neq k}}^{n-1} \sum_{\substack{j=i+1\\j\neq k}}^{n} h(x_i-x_j,y_i-y_j)\, ,
$$
and we have
\begin{multline*}
H_{n,k}'((x_1,y_1),\dots,(x_n,y_n)) + H_{n,k}\big((x,y);(x_1,y_1),\dots,(x_n,y_n)\big)\\
=\, H_{n}\big((x_1,y_1),\dots,(x_{k-1},y_{k-1}),(x,y),(x_{k+1},y_{k+1}),\dots,(x_n,y_n)\big)\, .
\end{multline*}
Therefore,
$$
\int_{\R^2} \frac{d\tilde{\alpha}_{n,k}\big(\cdot;(x_1,y_1),\dots,(x_n,y_n)\big)}{d\alpha}(x,y)\, \frac{d\tilde{\alpha}_n}{d\alpha^n}((x_1,y_1),\dots,(x_n,y_n))\, d\alpha(x_k,y_k)
$$
equals
$$
\frac{d\tilde{\alpha}_n}{d\alpha^n}\big((x_1,y_1),\dots,(x_{k-1},y_{k-1}),(x,y),(x_{k+1},y_{k+1}),\dots,(x_n,y_n)\big)\, .
$$
This implies condition 3 in Definition \ref{def:rcpd}.
Conditions 1 and 2 are true because of the joint measurability of the density, which just depends on the Hamiltonian.

Note that for any pair of points $(x,y)$, $(x',y')$, we have
\begin{equation}
\label{ineq:variation}
\left|H_{n,k}\big((x,y);(x_1,y_1),\dots,(x_n,y_n)\big) -
H_{n,k}\big((x',y');(x_1,y_1),\dots,(x_n,y_n)\big)\right|\, \leq\, 1\, ,
\end{equation}
because $|h(x-x_j,y-y_j) - h(x'-x_j,y'-y_j)|$ is either $0$ or $1$ for each $j$, and $H_{n,k}$ is a sum of $n-1$ such terms, then divided by $n-1$.
We may write
$$
\left(\frac{d\tilde{\alpha}_{n,k}\big(\cdot;(x_1,y_1),\dots,(x_n,y_n)\big)}{d\alpha}(x,y)\right)^{-1}\,
=\, Z_{n,k}\big(\alpha,\beta;(x_1,y_1),\dots,(x_n,y_n)\big)e^{\beta H_{n,k}((x,y);(x_1,y_1),\dots,(x_n,y_n))}
$$
as an integral
$$
\int_{\R^2} e^{\beta \left[H_{n,k}\big((x,y);(x_1,y_1),\dots,(x_n,y_n)\big) -
H_{n,k}\big((x',y');(x_1,y_1),\dots,(x_n,y_n)\big)\right]}\, d\alpha(x',y')\, .
$$
Therefore, the inequality (\ref{ineq:variation}) implies that
$$
e^{-|\beta|}\, \leq\, \left(\frac{d\tilde{\alpha}_{n,k}\big(\cdot;(x_1,y_1),\dots,(x_n,y_n)\big)}{d\alpha}(x,y)\right)^{-1}\,
\leq\, e^{|\beta|}\, .
$$
Of course, this implies the same bounds for the reciprocal. For all $(x,y) \in \R^2$,
$$
e^{-|\beta|}\, \leq\, \frac{d\tilde{\alpha}_{n,k}\big(\cdot;(x_1,y_1),\dots,(x_n,y_n)\big)}{d\alpha}(x,y)\,
\leq\, e^{|\beta|}\, .
$$
So, taking $p = e^{-|\beta|}$,
this means that the hypotheses of Lemma \ref{lem:CouplingGeneral} are satisfied: $\tilde{\alpha}_n$
has the ``$p$-resampling'' property relative to the measure $\alpha$.
Hence, we conclude that Lemma \ref{lem:CouplingAboveBelow} is true.

\section*{Acknowledgements}

We are very grateful to Janko Gravner for helpful suggestions, including directing us to reference \cite{DeuschelZeitouni}.
S.S\ is also grateful for advice from Bruno Nachtergaele, and he
is grateful for the warm hospitality of the Erwin Schr\"odinger Institute
where part of this research occurred.

\baselineskip=12pt

\end{document}